\theoremstyle{plain}
\newtheorem{lemma}[subsection]{Lemma}
\newtheorem{proposition}[subsection]{Proposition}
\newtheorem{theorem}[subsection]{Theorem}
\newtheorem{corollary}[subsection]{Corollary}
\newtheorem*{theorem*}{Theorem}
\newtheorem*{proposition*}{Proposition}
\theoremstyle{definition}
\newtheorem{example}[subsection]{Example}
\newtheorem{remark}[subsection]{Remark}
\numberwithin{equation}{section}
\DeclareMathOperator*{\colim}{colim}
\DeclareMathOperator*{\hocolim}{hocolim}
\DeclareMathOperator*{\holim}{holim}
\newcommand{\Cat}{\mathit{Cat}}% The category of small categories 
\newcommand{\Top}{\mathit{Top}} % The category of topological spaces
\newcommand{\cA}{\mathcal A} %mathcal font for categories
\newcommand{\cB}{\mathcal B}
\newcommand{\cX}{\mathcal X}
\newcommand{\cY}{\mathcal Y}
\newcommand{\cZ}{\mathcal Z}
\newcommand{\scC}{\mathscr{C}} %scriptfont for operads
\newcommand{\scD}{\mathscr{D}}
\newcommand{\scG}{\mathscr{G}}
\newcommand{\scK}{\mathscr{K}}
\newcommand{\scM}{\mathscr{M}}
\newcommand{\scP}{\mathscr{P}}
\newcommand{\scQ}{\mathscr{Q}}
\newcommand{\scJ}{\mathscr{J}}
\newcommand{\ang}[1]{\langle#1\rangle}
\newcommand{\num}[1]{\lvert#1\rvert}
\newcommand{\op}{\mathrm{op}}
\newcommand{\id}{\mathrm{id}}
\newcommand{\xr}{\xrightarrow}
\newcommand{\xl}{\xleftarrow}
\newcommand{\Max}{\operatorname{Max}}
\newcommand{\peq}{\preccurlyeq}
\newcommand{\speq}{\prec}
\newcommand{\suarrow}{\shortuparrow}
\newcommand{\sdarrow}{\shortdownarrow}
\title{Combinatorial and homotopical aspects of $E_n$-operads}
\date{\today}
\author{Christian Schlichtkrull} \address{Christian Schlichtkrull,
    Department of Mathematics, University of Bergen, P.O. Box 7803, N-5020 Bergen, Norway} \email{christian.schlichtkrull@uib.no}
\begin{document}

\begin{abstract}
We show that a certain class of categorical operads give rise to $E_n$-operads after geometric realization. The main arguments are purely combinatorial and avoid the technical topological assumptions otherwise found in the literature.
 \end{abstract}

 \subjclass[2010]{18M75, 55P48}
% \keywords{Categorical and topological operads}
\maketitle

\section{Introduction}
The notion of an operad was introduced by May~\cite{May72} as a useful devise for analyzing multiplicative structures on topological spaces with applications to the theory of iterated loop spaces. Of particular importance for the analysis of $n$-fold loop spaces is the  little $n$-cubes operad $\scC_n$ of Boardman and Vogt~\cite{Boardman-Vogt}. One says that a topological operad $\scD$ is an $E_n$-operad if there exists a chain of equivalences of the form
\[
\scD\xl{\simeq} \bullet\xr{\simeq}\bullet\xr{\simeq} \dots \xl{\simeq}\bullet \xr{\simeq}{\scC_n},
\] 
where each $\bullet$ represents a topological operad and each arrow an equivalence of such. Some authors also require an $E_n$-operad to be $\Sigma$-free in the sense that the symmetric groups act freely on the spaces of the operad; all the $E_n$-operads we consider will be $\Sigma$-free in this sense. The paper \cite{BM23} contains a useful survey of $E_n$-operads appearing in the literature with new proofs of many results.

There is a corresponding notion of a categorical operad and a process of geometric realization that turns a categorical operad $\scP$ into a topological operad $\num{\scP}$. Categorical operads appear in many contexts and it is often interesting to know if their geometric realizations are $E_n$-operads. A striking example of this situation occurs in the work of Balteanu-Fiedorowicz-Schw\"anzl-Vogt~\cite{BFSV} on iterated monoidal categories. Motivated by an attempt to find a categorical counterpart to the notion of an $n$-fold loop space, these authors introduce a categorical operad $\scM_n$ whose algebras can be identified with the so-called $n$-fold monoidal categories. The comparison to $n$-fold loop spaces is then enabled by the fact that
$\num{\scM_n}$ is an $E_n$-operad. 

An operad defined as in \cite{May72} (sometimes called a reduced operad) has degeneracy operators which makes it a contravariant functor on the category of non-empty finite sets and injective functions. Following Berger \cite{Berger97}, we use the term preoperad for such a contravariant functor which may in general not possess an operad product. As we recall in Section~\ref{sec:Operads-labels-orientations}, there are inclusions of categorical (pre)operads 
\begin{equation}\label{eq:intro-(pre)operad-inclusions}
\scM_n^{\suarrow}\to \scM_n\to\scK_n\to\scK_n^e,
\end{equation}
where $\scM_n$ is as above, $\scK_n$ is Berger's complete graph operad \cite{Berger97}, and $\scK_n^e$ is the extended version of the latter introduced in \cite{BFV07}. The first term $\scM_n^{\suarrow}$ is a well-known preoperad (sometimes called the Getzler-Jones preoperad) whose definition we recall in Section~\ref{subsec:restricted-preoperads} together with its dual version $\scM_n^{\sdarrow}$. By a categorical suboperad $\scP$ of $\scK_n^e$ we in general understand a family of full subcategories of the categories comprising $\scK_n^e$, such that the operad structure of the latter restricts to an operad structure on $\scP$. The notion of a categorical subpreoperad of $\scK_n^e$ is defined analogously. At the categorical level we have for each finite set $S$ a chain of inclusions of partially ordered sets
\begin{equation}\label{eq:intro-partially-ordered-sets-inclusions}
\scM_n^{\suarrow}(S)\to \scM_n(S)\to\scK_n(S)\to\scK_n^e(S)
\end{equation}
and similarly with $\scM_n^{\sdarrow}(S)$ instead of $\scM_n^{\suarrow}(S)$. The following is our main result.
\begin{theorem}\label{thm:intro-homotopy-initial-final}
Consider an inclusion $j\colon \cA\to \cB$ of full subcategories of  $\scK_n^e(S)$. If $\cA$ contains $\scM_n^{\sdarrow}(S)$, then $j$ is homotopy initial and if $\cA$ contains $\scM_n^{\suarrow}(S)$, then $j$ is homotopy final. 
\end{theorem}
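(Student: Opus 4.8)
The plan is to reduce the claim to a fiber-criterion for homotopy initiality and finality, and then to verify that criterion by producing, for each object, a contractible (indeed directed or conically contractible) comma category. Recall that a functor $j\colon \cA\to\cB$ between posets is homotopy final precisely when for every object $b\in\cB$ the comma category $b\down j$, i.e. the full subposet of $\cA$ on those $a$ with $b\leq a$, has contractible nerve; dually, $j$ is homotopy initial when each $j\down b$, the subposet of those $a$ with $a\leq b$, is contractible. So first I would fix $b\in\cB\subseteq\scK_n^e(S)$ and analyze the set of elements of $\cA$ lying above (respectively below) $b$ in the extended complete graph poset.

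The key structural input is the combinatorial description of $\scK_n^e(S)$: its objects are certain labelled complete graphs (orderings together with compatible sign/normal data in the $n$ coordinate directions), and the partial order records refinement. The crucial observation I would establish is that the preoperads $\scM_n^{\suarrow}(S)$ and $\scM_n^{\sdarrow}(S)$ are \emph{cofinal} and \emph{coinitial} in a very strong pointwise sense: given any $b\in\scK_n^e(S)$, there is a canonical element $b^{\suarrow}\in\scM_n^{\suarrow}(S)$ with $b\leq b^{\suarrow}$, obtained by pushing every coordinate relation to its maximal totally-ordered (monotone) configuration, and dually a canonical $b^{\sdarrow}\in\scM_n^{\sdarrow}(S)$ with $b^{\sdarrow}\leq b$. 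Concretely I expect $b^{\suarrow}$ to be the least upper bound of $b$ inside the Getzler--Jones locus, and the assignment $b\mapsto b^{\suarrow}$ to be order-preserving. I would prove these two facts directly from the definitions in Section~\ref{subsec:restricted-preoperads}.

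With such a canonical element in hand the contractibility argument becomes formal. For finality: fix $b$ and consider the subposet $F_b=\{a\in\cA: b\leq a\}$. Since $\cA\supseteq\scM_n^{\suarrow}(S)$ and since $b^{\suarrow}$ satisfies $b\leq b^{\suarrow}$, the element $b^{\suarrow}$ lies in $F_b$. I claim $b^{\suarrow}$ is in fact a \emph{maximum-adjacent cone point}: for every $a\in F_b$ one has $a\leq a^{\suarrow}=b^{\suarrow}$ (using that $a\geq b$ forces $a^{\suarrow}=b^{\suarrow}$ by monotonicity and idempotence of $(-)^{\suarrow}$ on the monotone locus), giving a zig-zag $a\leq b^{\suarrow}$ with $b^{\suarrow}$ fixed. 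This exhibits a natural transformation from the identity of $F_b$ to the constant functor at $b^{\suarrow}$, hence a contracting homotopy on nerves, so $N(F_b)$ is contractible. The dual argument with $b^{\sdarrow}$ and the inequality $b^{\sdarrow}\leq a$ for all $a\leq b$ shows each $j\down b$ is contractible, giving homotopy initiality when $\scM_n^{\sdarrow}(S)\subseteq\cA$.

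The main obstacle I anticipate is \emph{not} the formal contractibility step but the construction and verification of the canonical retractions $b\mapsto b^{\suarrow}$ and $b\mapsto b^{\sdarrow}$: one must check that collapsing each coordinate to a totally ordered configuration genuinely lands in $\scM_n^{\suarrow}(S)$ (respectively $\scM_n^{\sdarrow}(S)$) rather than merely in $\scK_n^e(S)$, that it dominates (respectively is dominated by) the original element in the extended order, and above all that it is monotone and idempotent so that the cone-point argument closes up. This is where the precise combinatorics of the sign labels and the distinction between the complete-graph order and its extension will do all the work, and where I would expect to need a careful case analysis on pairs of elements and their coordinatewise comparisons.
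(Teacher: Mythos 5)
There is a genuine gap, and it sits exactly where you predicted the work would lie: the canonical assignment $b\mapsto b^{\suarrow}$ with the properties you require does not exist. Your argument needs, for each $b$, an element $b^{\suarrow}\in\scM_n^{\suarrow}(S)$ that is a \emph{maximum} of the undercategory $F_b=\{a\in\cA\colon b\leq a\}$ (this is what ``$a\leq a^{\suarrow}=b^{\suarrow}$ for all $a\in F_b$'' asserts), together with monotonicity and idempotence of $(-)^{\suarrow}$. Already for $n=2$ and $S=\{1,2\}$ this fails. Take $b=1\Box_12$ (label $1$, orientation $(1,2)$) and $\cA=\scM_2^{\suarrow}(S)$. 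The elements of $\scM_2^{\suarrow}(S)$ lying above $b$ are $b$ itself, $1\Box_22$ and $2\Box_21$; the last two are incomparable, since a morphism between them would require a strict increase of labels (their orientations differ), which is impossible as both labels equal $2$. So $F_b$ has two distinct maximal elements and no maximum, and no cone-point/natural-transformation argument can contract it (its nerve is contractible here, but not because it is a cone on a terminal vertex). The same example refutes monotonicity-plus-idempotence abstractly: idempotence forces $(1\Box_22)^{\suarrow}=1\Box_22$ and $(2\Box_21)^{\suarrow}=2\Box_21$, while your claim that $a\geq b$ implies $a^{\suarrow}=b^{\suarrow}$ would force these two distinct objects to coincide. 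The paper itself flags this phenomenon: in the remark following the proposition on least elements in Section~\ref{sec:n=2proof}, it is noted that with respect to the categorical ordering the comma posets $(\scM_2^{\sdarrow}(S)\sdarrow\omega)$ usually have neither a least nor a greatest element.

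Consequently the contractibility of the comma categories is not formal; it is the technical heart of the paper. Your outer layer (the fiber criterion for homotopy finality/initiality, and implicitly Quillen's Theorem~A to pass from $\scM_n^{\suarrow}(S)$ to a general $\cA$ containing it) agrees with how the paper deduces Theorem~\ref{thm:intro-homotopy-initial-final} from Theorem~\ref{thm:homotopy-final-initial}, but the core of the latter is proved by an entirely different and genuinely nontrivial mechanism: a duality $D$ reduces the final statement to the initial one; the overcategory $(\scM_n^{\sdarrow}(S)\sdarrow\omega)$ is presented as a Grothendieck construction over a poset of ordered partitions (Proposition~\ref{prop:Jn(S)-over-Grothendieck}), which via Thomason's homotopy colimit theorem reduces everything inductively to $n=2$; and for $n=2$ the nerve is replaced, via the nerve lemma, by a simplicial complex $CP(\omega)$, which is then reduced to a point by a sequence of elementary collapses organized by an \emph{auxiliary} partial order $\peq$ (distinct from the categorical one, and for which a least element does exist). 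If you want to salvage your outline, the honest statement is that $F_b$ has several minimal upper bounds rather than a canonical one, and handling the resulting incomparabilities is precisely what the paper's collapsing argument is designed to do.
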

We recall the notions of homotopy initial and final functors in Section~\ref{sec:homotopy-initial-final}. Since any such functor induces an equivalence after geometric realization, it follows in particular that the inclusions in \eqref{eq:intro-(pre)operad-inclusions} and \eqref{eq:intro-partially-ordered-sets-inclusions} give rise to equivalences of their geometric realizations. In Section~\ref{sec:relation-little-n-cubes} we review the argument from~\cite{BFV07} showing that $\num{\scK_n^e}$ is an $E_n$-operad. Combining this with Theorem~\ref{thm:intro-homotopy-initial-final}, we get the following.

\begin{theorem}\label{thm:intro-categorical-sub(pre)operad-En}
If $\scP$ is a categorical sub(pre)operad of $\scK_n^e$ that contains one of the preoperads  $\scM_n^{\suarrow}$ or 
$\scM_n^{\sdarrow}$, then $\num{\scP}$ is an $E_n$-(pre)operad. 
\end{theorem}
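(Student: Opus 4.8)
The plan is to deduce this from Theorem~\ref{thm:intro-homotopy-initial-final} together with the fact, reviewed in Section~\ref{sec:relation-little-n-cubes}, that $\num{\scK_n^e}$ is an $E_n$-operad. By hypothesis $\scP$ is a sub(pre)operad of $\scK_n^e$, so the inclusion $j\colon\scP\to\scK_n^e$ is a map of categorical (pre)operads. Geometric realization is functorial and carries categorical (pre)operads to topological ones, so $j$ induces a map of topological (pre)operads $\num{j}\colon\num{\scP}\to\num{\scK_n^e}$. It therefore suffices to prove that $\num{j}$ is an equivalence; splicing it onto the front of the chain of equivalences that exhibits $\num{\scK_n^e}$ as an $E_n$-(pre)operad then exhibits $\num{\scP}$ as one as well.

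First I would verify that $\num{j}$ is a level-wise weak equivalence. The hypothesis that $\scP$ contains $\scM_n^{\suarrow}$ (respectively $\scM_n^{\sdarrow}$) unpacks to the statement that $\scM_n^{\suarrow}(S)\subseteq\scP(S)$ (respectively $\scM_n^{\sdarrow}(S)\subseteq\scP(S)$) for every finite set $S$. The component of $j$ indexed by $S$ is then precisely an inclusion of partially ordered subsets $j_S\colon\scP(S)\to\scK_n^e(S)$ to which Theorem~\ref{thm:intro-homotopy-initial-final} applies: $j_S$ is homotopy final in the first case and homotopy initial in the second. As recalled in Section~\ref{sec:homotopy-initial-final}, a homotopy final or homotopy initial functor becomes a weak homotopy equivalence after geometric realization, so $\num{j_S}$ is a weak equivalence for every $S$. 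Since $\num{j}$ is a map of topological (pre)operads whose component indexed by each finite set $S$ is $\num{j_S}$, it is a level-wise weak equivalence, hence an equivalence of topological (pre)operads.

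Combining the two steps, $\num{\scP}\xr{\simeq}\num{\scK_n^e}$ is an equivalence that we may prepend to the zigzag connecting $\num{\scK_n^e}$ to $\scC_n$, so that $\num{\scP}$ is an $E_n$-(pre)operad. The only remaining point is $\Sigma$-freeness: the symmetric groups act on $\scK_n^e(S)$ by relabeling the elements of $S$, this action is free, and it restricts to the $\Sigma$-invariant full subcategories $\scP(S)$, on whose realizations it therefore remains free. I expect the real content to lie entirely in Theorem~\ref{thm:intro-homotopy-initial-final}; the present argument is formal, the one step deserving care being the assembly of the level-wise equivalences $\num{j_S}$ into a single equivalence of (pre)operads, which follows from the functoriality of realization and its compatibility with finite products applied to the map of categorical (pre)operads $j$.
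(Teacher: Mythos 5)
Your argument is correct and is essentially the paper's own proof: the paper merely packages your level-wise application of Theorem~\ref{thm:intro-homotopy-initial-final} (via Quillen's Theorem A) as Theorem~\ref{thm:suboperad-Kne-equivalence}, concluding that $\num{\scP}\to\num{\scK_n^e}$ is an equivalence of topological (pre)operads, and then splices this onto the zigzag $\num{\scK_n^e}\xl{\simeq}F_{h\scK_n^e}\xr{\simeq}\scC_n$ from Section~\ref{sec:relation-little-n-cubes}, exactly as you do. Your closing remark on $\Sigma$-freeness is a harmless addition; the paper treats $\Sigma$-freeness as a standing convention rather than as part of this proof.
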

There are several ways in which these theorems improve on existing results in the literature. First of all, there are many examples of categorical sub(pre)operads of $\scK_n^e$ containing either $\scM_n^{\suarrow}$ or $\scM_n^{\sdarrow}$ and thus we get new examples of $E_n$\nobreakdash-(pre)operads. Furthermore, since our proof of Theorem~\ref{thm:intro-homotopy-initial-final} is purely combinatorial, we can dispense with the technical topological assumptions sometimes found in the literature. This is particularly relevant for the proof that $\num{\scM_n}$ is an $E_n$-operad as presented in \cite[Theorem~3.14]{BFSV}. In this proof a certain diagram is required to be Reedy cofibrant, but as we shall see in Example~\ref{ex:FMn-diagram-not-Reedy-cof}, this condition is not satisfied in general. (The same erroneous assumption is carried over in the proposed proof in \cite{BM23}.) We discuss this situation in more detail in Section~\ref{sec:relation-little-n-cubes}. Finally, the result of Theorem~\ref{thm:intro-homotopy-initial-final} is much stronger than merely asserting that these inclusions give rise to equivalences after geometric realization. As we recall in Section~\ref{sec:homotopy-initial-final}, the statement in Theorem~\ref{thm:intro-homotopy-initial-final} implies that the induced maps of homotopy limits and colimits are equivalences for all diagrams indexed by such categories. 
This is exploited in forthcoming joint work by M. Solberg and the author, where these partially ordered sets are used as index categories in connection with various forms of higher monoidal operads related to Batanin's theory of $n$-operads~\cite{Ba-Sym}. 

\subsection{Organization of the paper}
We begin in Section~\ref{sec:Operads-labels-orientations} with a detailed review of the categorical (pre)operads appearing in \eqref{eq:intro-(pre)operad-inclusions}. In Section~\ref{sec:homotopy-initial-final} we state the main results of the paper and begin the proof of Theorem~\ref{thm:intro-homotopy-initial-final} by first reducing the proof of the theorem for general $n$ to the special case $n=2$. The case $n=2$ of the theorem is then proved in Section~\ref{sec:n=2proof} by designing an algorithm for showing that a certain simplicial complex is contractible. Finally, in Section~\ref{sec:relation-little-n-cubes}, we review the relation to the little $n$-cubes operad and use this to finish the proof of Theorem~\ref{thm:intro-categorical-sub(pre)operad-En}.

\subsection*{Acknowledgements}
We would like to thank Mirjam Solberg for helpful discussions related to this work.

\subsection{Notation and conventions}\label{subsec:notation-conventions}
We write $\Cat$ for the category of small categories and $\Top$ for the category of compactly generated weak Hausdorff topological spaces. By an equivalence in $\Top$ we generally mean a weak homotopy equivalence. Many of the equivalences we consider are actual homotopy equivalences, but we shall not emphasize this. The geometric realization of a small category $\cA$ is obtained by first forming the nerve $N\cA$ and then the geometric realization $\num{N\cA}$ of this simplicial set. It will be convenient to omit $N$ from the notation and let $\num{\cA}$ denote the geometric realization. Given a small category $\cA$ and an $\cA$-diagram $F\colon \cA\to\Top$, we write $\hocolim_{\cA}F$ for the homotopy colimit of $F$ as defined in \cite{Hirschhorn}. 
We shall consider both topological operads (internal to $\Top$) and categorical operads (internal to $\Cat$) and refer to \cite{May-definitions, May72} for the relevant definitions. The geometric realization of a categorical operad $\scP$ is the topological operad $\num\scP$ obtained by geometric realization in each operad degree. A map of topological operads is said to be an equivalence if the map in each operad degree is an equivalence.

\section{Operads arising from edge labels and orientations}\label{sec:Operads-labels-orientations}
Let $S$ be a finite set. The complete graph on $S$ has as its edges all pairs $\{x,y\}$ of distinct elements from $S$. For each natural number $n\geq 1$ we shall consider a category $\scG_n(S)$ of edge labels and orientations. An object $\mu$ of $\scG_n(S)$ is given by a choice of label $\mu\{x,y\}$ in $\{1,\dots,n\}$ and orientation $\vec\mu\{x,y\}\in\{(x,y),(y,x)\}$ for each edge $\{x,y\}$. There is a morphism $\mu\to\nu$ if for each $\{x,y\}$ we have either 
\[
\begin{cases}
\vec\mu\{x,y\}=\vec\nu\{x,y\}\\
\mu\{x,y\}\leq \nu\{x,y\}
\end{cases}
\text{ or }\quad
\begin{cases}
\vec\mu\{x,y\}\neq\vec\nu\{x,y\}\\
\mu\{x,y\}< \nu\{x,y\}.
\end{cases}
\]
Thus, $\scG_n(S)$ is by definition a partially ordered set. Our convention for $S=\emptyset$ is to let $\scG_n(\emptyset)=\{0\}$, and if $S=\{x\}$ is a singleton, we write $\scG_n(\{x\})=\{x\}$, thinking of $0$ and $x$ as representing the unique ``labeling/orientation'' in each case. We usually write $\scG_n(k)$ instead of $\scG_n(\{1,\dots,k\})$.
Given finite sets $S_1, \dots, S_k$ for $k\geq 1$, there are operad structure maps
\begin{equation}\label{eq:Gn-operad-product}
\gamma\colon \scG_n(k)\times \scG_n(S_1)\times\dots\times \scG_n(S_k)\to \scG_n(S_1\sqcup\dots\sqcup S_k),
\end{equation}
where $S_1\sqcup\dots\sqcup S_k$ denotes the disjoint union of the sets. This functor takes a tuple of objects $\mu$ in 
$\scG_n(k)$ and $\nu_i$ in $\scG_n(S_i)$ for $i=1,\dots,k$, to the object with labels
\[
\gamma(\mu,\nu_1,\dots,\nu_k)\{x,y\}=
\begin{cases}
\nu_i\{x,y\},& \text{if $\{x,y\}\subseteq S_i$},\\
\mu\{i,j\},& \text{if $x\in S_i$, $y\in S_j$, $i\neq j$},
\end{cases}
\]
and orientations
\[
\vec\gamma(\mu,\nu_1,\dots,\nu_k)\{x,y\}=
\begin{cases}
\vec\nu_i\{x,y\},& \text{if $\{x,y\}\subseteq S_i$},\\
(x,y),& \text{if $x\in S_i$, $y\in S_j$, $i\neq j$, and $\vec \mu\{i,j\}=(i,j)$}.
\end{cases}
\]
Restricted to the categories $\scG_n(k)$ for $k\geq 0$ we obtain a categorical operad $\scG_n$. This has as its unit the unique object $1$ in $\scG_n(1)$ and the right action of an element $\sigma$ in the symmetric group $\Sigma_k$ is defined on labels by 
$\mu\sigma\{x,y\}=\mu\{\sigma(x),\sigma(y)\}$ and on orientations by
\[
\vec{\mu\sigma}\{x,y\}=(x,y)\quad\text{if} \quad
\vec\mu\{\sigma(x),\sigma(y)\}=(\sigma(x),\sigma(y)).
\]
The operad product of $\scG_n$ is defined as the above map $\gamma$ by giving the ordered disjoint union of ordered sets in the codomain the induced ordering. 

In the following we shall not be concerned with the operad $\scG_n$ itself, but rather with certain categorical suboperads that give rise to $E_n$-operads upon geometric realization (this is not the case for $\scG_n$).  In the following $S$ denotes a finite set as in the beginning of the section.

\subsection{The complete graph operad $\scK_n$} 
Let $\scK_n(S)$ be the full subcategory of $\scG_n(S)$ defined by the objects $\mu$ that do not contain an oriented cycle. Here an oriented cycle in $\mu$ means a sequence $x_1,\dots,x_k$ of distinct elements from $S$ for $k>1$, such that $\vec\mu\{x_i,x_{i+1}\}=(x_i,x_{i+1})$ for $1\leq i<k$ and $\vec\mu\{x_k,x_1\}=(x_k,x_1)$. Hence, if $\mu$ is an object of $\scK_n(S)$, then $\vec\mu$ defines a linear ordering of $S$. It is clear that the operad structure of $\scG_n$ restricts to define a categorical operad $\scK_n$. This operad was originally introduced by Berger \cite{Berger97}. 

\subsection{The extended complete graph operad $\scK^{e}_n$}
Let $\scK^{e}_n(S)$ be the full subcategory of $\scG_n(S)$ defined by the objects that do not contain an oriented cycle with constant label on the edges. This means that if $\mu$ is an object of $\scG_n(S)$ and $x_1,\dots,x_k$ is an oriented cycle in $\mu$, then for $\mu$ to be an object of $\scK^{e}_n(S)$ we require that the labels $\mu\{x_i,x_{i+1}\}$ for $1\leq i<k$ and $\mu\{x_k,x_1\}$ are not all equal. It is again clear that the operad structure of $\scG_n$ restricts to define a categorical operad $\scK^e_n$ which by definition contains $\scK_n$ as a suboperad. The operad $\scK^e_n$ was introduced by Brun-Fiedorowicz-Vogt in \cite{BFV07} (there denoted $\scK_n$). We shall later review some of the motivations for introducing $\scK^e_n$.

\subsection{The operad $\scM_n$ governing $n$-fold monoidal categories}
The categorical operad $\scM_n$ was introduced by Balteanu-Fiedorowicz-Schw\"anzl-Vogt in their work on $n$\nobreakdash-fold monoidal categories~\cite{BFSV}. Indeed, the defining property of $\scM_n$ is that its algebras in $\Cat$ are exactly the $n$-fold monoidal categories as defined in the mentioned paper. In this paper we shall use an equivalent definition of $\scM_n$ as a suboperad of $\scK_n$. This requires some preparations. Consider for each $1\leq i\leq n$ the object $1\Box_i2$ of $\scK_n(2)$ that to the edge $\{1,2\}$ assigns the label $i$ and the orientation $(1,2)$. Let $(S_1,S_2)$ be an ordered partition of $S$ in the sense that $S_1$ and $S_2$ are non-empty disjoint subsets of $S$ with union $S$. Restricting the operad product \eqref{eq:Gn-operad-product} to $\scK_n$ and setting the first variable equal to $1\Box_i2$, we define a product
\[
\Box_i\colon \scK_n(S_1)\times \scK_n(S_2)\to \scK_n(S)
\]
upon identifying $S_1\sqcup S_2$ with $S$ in the canonical way. Thus, given objects $\mu_1$ in $\scK_n(S_1)$ and $\mu_2$ in $\scK_n(S_2)$, the linear ordering of $S$ induced by $\mu_1\Box_i\mu_2$ is the ordered disjoint union $S_1\sqcup S_2$ of the ordered sets $S_1$ and $S_2$. Using the products $\Box_i$, the definition of $\scM_n(S)$ proceeds recursively with respect to the cardinality of $S$. To begin we again set $\scM_n(\emptyset)=\{0\}$ and $\scM_n(\{x\})=\{x\}$. For a finite set $S$ with more than one element, suppose that $\scM_n(S')$ has been defined for all proper subset $S'$ of $S$. We then define $\scM_n(S)$ to be the full subcategory of $\scK_n(S)$ in which an object is in the image of the composition
\[
\scM_n(S_1)\times \scM_n(S_2)\to \scK_n(S_1)\times \scK_n(S_2)\xr{\Box_i} \scK_n(S)
\]
for some ordered partition $(S_1,S_2)$ and some $1\leq i\leq n$. The effect of this definition is to make $\scM_n(S)$ the subcategory of ``decomposable'' objects in $\scK_n(S)$. For instance, a typical object of $\scM_3(6)$ has the form
\[
3\Box_2((2\Box_36)\Box_14)\Box_2(1\Box_15).
\]
Notice that for an object $\mu$ of $\scM_n(S)$, the order in which the elements of $S$ appear in the decomposition is the same as the linear ordering of $S$ specified by $\mu$ as an object of $\scK_n(S)$. We claim that the operad product \eqref{eq:Gn-operad-product} restricts to a product with $\scM_n$ instead of $\scG_n$. In order to verify this, consider an ordered partition $(T_1,T_2)$ of $\{1,\dots,k\}$ and suppose that we are given objects $\mu_i$ in $\scK_n(T_i)$ for $i=1,2$ and $\nu_i$ in $\scK_n(S_i)$ for $i=1,\dots,k$. Then we have the equality
\[
\gamma(\mu_1\Box_i\mu_2,\nu_1,\dots,\nu_k)
=\gamma(\mu_1,\ang{\nu_t}_{t\in T_1})\Box_i\gamma(\mu_2,\ang{\nu_t}_{t\in T_2})
\]
in which the definition of the terms on the right hand side is supposed to be self-explanatory. Using this, it follows by induction on $k$ that the operad product \eqref{eq:Gn-operad-product} is indeed stable under the passage from $\scG_n$ to $\scM_n$. We shall use the notation $\scM_n$ for the categorical operad so defined. From the decomposable object point of view, the first variable of the operad product serves as a template for how to form the iterated product of the remaining variables. For instance, writing
\[
\mu=2\Box_1(3\Box_21), \quad \nu_1=1\Box_12,\quad \nu_2=1\Box_22,\quad \nu_3=1\Box_32,
\]
the corresponding $\scM_3$ operad product is given by
\[
\gamma(\mu,\nu_1,\nu_2,\nu_3)=(3\Box_24)\Box_1((5\Box_36)\Box_2(1\Box_12)).
\]
The fact that the description of $\scM_n$ given here agrees with the original definition in \cite{BFSV} is essentially a consequence of the coherence statement in \cite[Theorem~3.6]{BFSV}.

\subsection{The restricted preoperads  $\scM_n^{\suarrow}$ and $\scM_n^{\sdarrow}$}\label{subsec:restricted-preoperads}
We define a subcategory $\scM_n^{\suarrow}(S)$ of $\scM_n(S)$ by a recursive procedure similar to that used for $\scM_n(S)$, but with the requirement that the products $\Box_i$ be applied in increasing order with respect to $i$. In detail, we again have
$\scM_n^{\suarrow}(\emptyset)=\{0\}$ and $\scM_n^{\suarrow}(\{x\})=\{x\}$. For a finite set $S$ with more than one element, assume that $\scM_n^{\suarrow}(S')$ has been defined for all proper subsets $S'\subseteq S$ and all $n\geq 1$. For each $1\leq i\leq n$, we view $\scM_i(S')$ as a subcategory of $\scM_n(S')$ in the obvious way so that also $\scM_i^{\suarrow}(S')$ becomes a subcategory of $\scM_n(S')$. We then define $\scM_n^{\suarrow}(S)$ to be the full subcategory of $\scM_n(S)$ in which an object is in the image of the composition
\[
\scM_i^{\suarrow}(S_1)\times \scM_i^{\suarrow}(S_2) \to \scM_n(S_1)\times \scM_n(S_2)\xr{\Box_i}\scM_n(S)
\]
for some ordered partition $(S_1,S_2)$ and some choice of index $1\leq i\leq n$. In a similar fashion we define a subcategory 
$\scM_n^{\sdarrow}(S)$ of $\scM_n(S)$ by requiring that the products $\Box_i$ be applied in decreasing order: The categories 
$\scM_n^{\sdarrow}(\emptyset)$ and $\scM_n^{\sdarrow}(\{x\})$ are as before and for a finite set $S$ with more than one element, we again suppose that $\scM_n^{\sdarrow}(S')$ has been defined for all proper subsets $S'$. For each $1\leq i\leq n$, we let 
$\scM_{[i,n]}(S')$ be the subcategory of $\scM_n(S')$ in which the objects have labels in $\{i,\dots,n\}$, and we write $\scM_{[i,n]}^{\sdarrow}(S')$ for the intersection $\scM_n^{\sdarrow}(S')\cap \scM_{[i,n]}(S')$. Then $\scM_n^{\sdarrow}(S)$ is the full subcategory of $\scM_n(S)$ in which an object is in the image of the composition
\[
\scM_{[i,n]}^{\sdarrow}(S_1)\times \scM_{[i,n]}^{\sdarrow}(S_2) \to \scM_n(S_1)\times \scM_n(S_2)\xr{\Box_i}\scM_n(S)
\]
for some ordered partition $(S_1,S_2)$ and index $1\leq i\leq n$. For instance, the objects
\[
3\Box_3((2\Box_16)\Box_24)\Box_3(1\Box_15) \quad \text{and}\quad 3\Box_1((2\Box_36)\Box_24)\Box_1(1\Box_35)
\]
are in $\scM^{\suarrow}_3(6)$ and $\scM^{\sdarrow}_3(6)$ respectively, whereas the object in $\scM_3(6)$ considered above is in neither of these subcategories. It follows from the definitions that there are inclusions of partially ordered sets 
\begin{equation}
\scM_n^{\suarrow}(S),\scM_n^{\sdarrow}(S)\subseteq \scM_n(S)\subseteq  \scK_n(S)\subseteq \scK_n^e(S).
\end{equation}
The operad product of $\scM_n$ does not restrict to give operads $\scM^{\suarrow}_n$ and $\scM^{\sdarrow}_n$. Instead these two families of categories each inherits the structure of a preoperad in the sense of \cite{Berger97}.
\begin{remark}
The partially ordered sets $\scM^{\suarrow}_n(S)$ and $\scM^{\sdarrow}_n(S)$ appear in various guises and with different choices of notation in the literature, eg.\ \cite{GJ94, Berger97, BFSV,Ba-Sym, AH14}. For instance, in \cite{Berger97} the preoperad $\scM^{\suarrow}_n$ is denoted $\scK(F)^{(n)}$ since it arises combinatorially from the cell structure of the classical configuration preoperad $F^{(n)}$, whereas in \cite{BFSV}, the preoperad $\scM^{\sdarrow}_n$ is denoted $\bar\scJ_n$ and called the Milgram preoperad because of its relation to the Milgram construction \cite{Milgram66}. The preoperad $\scM^{\sdarrow}_n$ is also closely related to Batanin's theory of (pruned) $n$-operads~\cite{Ba-EH}. Furthermore, the inclusions of $\scM^{\suarrow}_n$ and $\scM^{\sdarrow}_n$ in $\scM_n$ are reflected in the so-called internal $n$-operads and $n$-cooperads considered in \cite[Section~11]{Ba-EH}.

\end{remark}
\subsection{Duality properties}
We define a duality isomorphism $D\colon \scG_n(S)\to \scG_n(S)^{\op}$ by reversing the order of the labels while keeping the orientations fixed:
\[
D\mu\{x,y\}=n+1-\mu\{x,y\},\qquad D\vec\mu\{x,y\}=\vec\mu\{x,y\}.
\]
It is clear that $D$ restricts to duality isomorphisms of $\scK_n^e(S)$ and $\scK_n(S)$. Furthermore, it follows from the commutative diagrams
\[
\xymatrix@C+10pt@R-4pt{
\scK_n(S_1)\times \scK_n(S_2)\ar[d]_{D\times D} \ar[r]^-{\Box_i} & \scK_n(S)\ar[d]^{D} \\
\scK_n(S_1)^{\op}\times \scK_n(S_2)^{\op} \ar[r]^-{\Box_{n+1-i}} & \scK_n(S)^{\op}
}
\]
that there are isomorphism $D\colon\scM_n(S)\to\scM_n(S)^{\op}$ and $D\colon \scM_n^{\suarrow}(S)\to\scM_n^{\sdarrow}(S)^{\op}$. Putting all this together we get the commutative diagram
\begin{equation}\label{eq:duality-diagram}
\xymatrix@-4pt{
\scM_n^{\suarrow}(S) \ar[r] \ar[d]^{D}_{\cong}& \scM_n(S) \ar[r] \ar[d]^{D}_{\cong}& \scK_n(S) \ar[r] \ar[d]^{D}_{\cong}& 
\scK_n^e(S)\ar[d]^{D}_{\cong}\\
\scM_n^{\sdarrow}(S)^{\op} \ar[r] & \scM_n(S)^{\op} \ar[r] & \scK_n(S)^{\op} \ar[r] & \scK_n^e(S)^{\op}.
}
\end{equation}
For instance, the objects in $\scM^{\suarrow}_3(6)$ and $\scM^{\sdarrow}_3(6)$ considered in Section~\ref{subsec:restricted-preoperads} are dual in this sense.

\section{Homotopy initial and homotopy final inclusions} \label{sec:homotopy-initial-final}
Consider in general a functor $j\colon \cA\to\cB$ between small categories $\cA$ and $\cB$. We say that $j$ is \emph{homotopy final} if the geometric realization of the undercategory $(b\sdarrow j)$ is contractible for each object $b$ in $\cB$. By \cite[Theorem~19.6.7]{Hirschhorn} this implies that for any diagram $F\colon \cB\to\Top$, the canonical map 
\[
\textstyle\hocolim_{\cA}F\!\circ \!j\to\hocolim_{\cB}F
\] 
is an equivalence. (Here we use the well-known fact that homotopy colimits are homotopy invariant for all diagrams in $\Top$). 
Taking $F$ to be the terminal diagram, we recover Quillen's Theorem A: If $j\colon\cA\to\cB$ is homotopy final, then $\num{j}\colon\num{\cA}\to\num{\cB}$ is an equivalence. We shall shortly need the following lemma and for completeness we provide a proof based on Quillen's Theorem A. 

\begin{lemma}\label{lem:2-of-3-homotopy-final}
Let $\cZ$ be a partially ordered set and let $\cX\subseteq\cY\subseteq\cZ$ be full subcategories, where we assume that the inclusion $h\colon \cX\to\cZ$ is homotopy final. Then the inclusions $i\colon \cX\to\cY$ and $j\colon\cY\to\cZ$ are also homotopy final.
\end{lemma}
\begin{proof}
We first observe that for each $z\in \cZ$, the undercategory $(z\sdarrow h)$ can be identified with the full subcategory $\cX_{\geq z}$ given by the elements $x\in \cX$ such that $x\geq z$. Similarly, for each $y\in\cY$, the undercategory $(y\sdarrow i)$ can be identified with the full subcategory $\cX_{\geq y}$. Since we assume $h$ to be homotopy final, the geometric realization of $\cX_{\geq y}$ is contractible which shows that $i$ is homotopy final. In order to show that $j$ is homotopy final, we must verify that the geometric realization of $(z\sdarrow j)\cong \cY_{\geq z}$ is contractible for each $z\in \cZ$. Let $k\colon \cX_{\geq z}\to\cY_{\geq z}$ be the inclusion and note that $(y\sdarrow k)$ is isomorphic to $\cX_{\geq y}$ for each $y\in\cY_{\geq z}$. Hence $k$ is homotopy final so it follows from Quillen's Theorem A that $\num{\cX_{\geq z}}$ being contractible implies that also $\num{\cY_{\geq z}}$ is contractible.     
\end{proof}

Dually, we say that that a functor $j\colon \cA\to\cB$ is \emph{homotopy initial} if the geometric realization of the overcategory $(j\sdarrow b)$ is contractible for each object $b$ in $\cB$. In this case \cite[Theorem~19.6.7]{Hirschhorn} implies that with $F$ as above, the canonical map
\[
\textstyle\holim_{\cB}F\to \holim_{\cA}F\!\circ\! j
\]
is an equivalence. It is a formal consequence of the definitions that a functor $j\colon \cA\to\cB$ is homotopy initial if and only if $j^{\op}\colon \cA^{\op}\to \cB^{\op}$ is homotopy final. Now let $S$ be a finite set. The main point of the paper is to prove the following.

\begin{theorem}\label{thm:homotopy-final-initial}
The inclusion $\scM_n^{\suarrow}(S) \to \scK_n^e(S)$ is homotopy final and the inclusion
$\scM_n^{\sdarrow}(S) \to \scK_n^e(S)$ is homotopy initial.
\end{theorem}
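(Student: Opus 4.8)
The plan is to prove only the homotopy finality of $\scM_n^{\suarrow}(S)\to\scK_n^e(S)$; the homotopy initiality of $\scM_n^{\sdarrow}(S)\to\scK_n^e(S)$ then follows formally. Indeed, the duality diagram~\eqref{eq:duality-diagram} identifies the inclusion $\scM_n^{\suarrow}(S)\to\scK_n^e(S)$ with the opposite of $\scM_n^{\sdarrow}(S)\to\scK_n^e(S)$, and as recalled in this section a functor is homotopy final precisely when its opposite is homotopy initial. Since every category in sight is a partially ordered set, the undercategory $(b\sdarrow j)$ attached to an object $b$ of $\scK_n^e(S)$ is simply the up-set $U_b=\{a\in\scM_n^{\suarrow}(S):b\le a\}$, and homotopy finality amounts to the assertion that $\num{U_b}$ is contractible for every $b$.

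Before describing a contraction it is worth recording that the cheap arguments are unavailable: $U_b$ need not have a least element, nor need it be upward directed. For example, with $n=2$ and $S=\{1,2,3\}$ one can take $b$ to be the linearly ordered object with all labels $1$ and exhibit two elements $a_1,a_2\in U_b$ whose edge $\{1,2\}$ carries opposite orientations, both with the maximal label. An object above both would have to agree on this edge with each, which is impossible, so no object of $\scG_n(S)$ lies above both. Thus $U_b$ is genuinely contractible rather than conically so, and an explicit argument is required.

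To organize the proof I would induct on $n$, the case $n=1$ being trivial since there $\scM_1^{\suarrow}(S)=\scK_1^e(S)$ and each $U_b$ is a point. For the inductive step I would exploit the recursive description of $\scM_n^{\suarrow}(S)$: at its outermost level every object is an iterated $\Box_n$-product, so it records an ordered partition $S=B_1\sqcup\dots\sqcup B_m$ whose cross-block edges all carry the label $n$ oriented forward, together with objects of $\scM_{n-1}^{\suarrow}(B_j)$ inside the blocks. Sending $a\in U_b$ to this ordered partition $P(a)$ is order preserving, because as $a$ increases a label-$n$ forward edge can only persist, so the top-level partition only refines. This exhibits a poset map from $U_b$ to a poset of ordered partitions of $S$ whose fibers are, up to the evident product over blocks, up-sets of the form $\{a_j\in\scM_{n-1}^{\suarrow}(B_j):b|_{B_j}\le a_j\}$ (empty unless $b|_{B_j}$ already avoids the label $n$), and these are contractible by the inductive hypothesis. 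A Quillen-style fiber argument (Quillen's Theorem~A) then reduces the contractibility of $U_b$ to that of the base poset, and this base is precisely the two-label situation governing the case $n=2$ (for $n=2$ the fibers are trivial and the base is the whole problem).

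It therefore remains to treat $n=2$, which is the heart of the matter and the step I expect to be the main obstacle. Here an object of $\scM_2^{\suarrow}(S)$ is a linear order on $S$ together with a choice of consecutive blocks, the within-block edges carrying the label $1$ and the between-block edges the label $2$, and $U_b$ is the poset of all such data lying above a fixed $b\in\scK_2^e(S)$. I would prove that its order complex is contractible by an explicit algorithm: beginning from an arbitrary element of $U_b$, one repeatedly performs elementary moves — raising a minimal disagreeing edge to the label $2$, or sliding a block boundary — each comparable in $U_b$ to the preceding state, so as to organize $U_b$ into a sequence of elementary collapses in the sense of discrete Morse theory and to contract it onto a single canonical vertex. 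The delicate part is the combinatorial bookkeeping needed to guarantee that these moves always remain inside $U_b$ and never create an oriented monochromatic cycle, so that all intermediate objects stay in $\scK_2^e(S)$; checking that the collapses are well defined and exhaust the complex is where the real work lies.
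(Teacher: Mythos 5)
Your outer reductions are, up to duality, the same as the paper's. The paper likewise uses the duality diagram \eqref{eq:duality-diagram} to prove only one of the two statements (it proves homotopy initiality of $\scM_n^{\sdarrow}(S)\to\scK_n^e(S)$, the mirror image of your choice), and its induction on $n$ is exactly your block decomposition: the overcategory $(\scM_n^{\sdarrow}(S)\sdarrow\omega)$ is identified in Proposition~\ref{prop:Jn(S)-over-Grothendieck} with a Grothendieck construction over the poset $P(\omega)$ of compatible ordered partitions, the fibers $\prod_i(\scM^{\sdarrow}_{[2,n]}(S_i)\sdarrow\omega_{S_i})$ are contractible by induction, and the base is then recognized as $(\scM_2^{\sdarrow}(S)\sdarrow\omega')$ for a suitable $\omega'\in\scK_2^e(S)$, i.e.\ your ``two-label situation''. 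One imprecision in this part: Quillen's Theorem~A requires contractibility of the comma categories of your projection $a\mapsto P(a)$, not of its fibers, so ``contractible fibers'' alone is not enough; you must either verify that the projection is a poset opfibration (cocartesian lifts exist, obtained by restricting block data along refinements), or, as the paper does, invoke Thomason's homotopy colimit theorem \eqref{eq:Thomason-hocolim} for the Grothendieck construction. That is fixable.

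The genuine gap is the case $n=2$, which you yourself flag as the heart of the matter and then leave as a plan. ``Elementary moves organized into a sequence of elementary collapses'' is a statement of intent, not an argument: the moves you describe produce an edge path from an arbitrary vertex of $U_b$ to a canonical one, which proves connectivity, not contractibility, and nothing in the proposal identifies free faces, orders the collapses, or guarantees that the process terminates at a single vertex. Section~\ref{sec:n=2proof} of the paper shows how much structure this actually requires; in particular, the paper does \emph{not} collapse the order complex of the poset directly. It first replaces $\num{P(\omega)}$, via Bj\"orner's nerve lemma \cite{Bjorner81} applied to the cover by the principal down-sets $P_v$ (Lemma~\ref{lem:intersection-Cap}), by the flag complex $CP(\omega)$ whose simplices are the subsets pairwise related under the symmetric relation $\Cap$; it then introduces a second partial order $\peq$, different from the categorical one, together with an operation $\wedge$ that is not an infimum in general, proves that every \emph{maximal} simplex of $CP(\omega)$ has a $\peq$-least vertex, and runs the collapsing induction through an invariant (the notion of a good subcomplex) that is preserved by each collapse, with Lemma~\ref{lem:u-v-tilde-v-existence} supplying the element $\tilde v$ that Lemma~\ref{lem:free-in-good-subcomplex} needs in order to certify free faces. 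None of these ingredients -- the auxiliary complex, the auxiliary order, or the collapse invariant -- appears in your proposal, and without them (or some comparable mechanism) the $n=2$ case, and hence the theorem, remains unproven.
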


Before commencing on the proof, we show how to derive Theorem~\ref{thm:intro-homotopy-initial-final} from the introduction.

\begin{proof}[Proof of Theorem~\ref{thm:intro-homotopy-initial-final}]
Suppose that $\cA$ contains $\scM_n^{\suarrow}(S)$. Using Lemma~\ref{lem:2-of-3-homotopy-final} in the case of the inclusions $\scM_n^{\suarrow}(S)\subseteq\cA\subseteq \scK_n^e(S)$, we conclude from Theorem~\ref{thm:homotopy-final-initial} that the inclusion $\cA\to \scK_n^e(S)$ is homotopy final. Another application of Lemma~\ref{lem:2-of-3-homotopy-final}, now to the inclusions $\cA\subseteq\cB\subseteq \scK_n^e(S)$, then gives the result.
The case where $\cA$ contains $\scM_n^{\sdarrow}(S)$ is similar.
\end{proof}

\begin{example}
Each inclusion in the chain
\[
\scM_n^{\suarrow}(S) \to \scM_n(S) \to \scK_n(S) \to \scK_n^e(S)
\]
is homotopy final and each inclusion in the chain
\[
\scM_n^{\sdarrow}(S) \to \scM_n(S) \to \scK_n(S) \to \scK_n^e(S)
\]
is homotopy initial.
\end{example}

\begin{remark}
It is stated in \cite{BFV07} that results from \cite{BFSV} and \cite{Berger97} imply that $\num{\scK_n}\to \num{\scK_n^e}$ is an equivalence of operads. A detailed combinatorial proof of this equivalence has been given in \cite[Theorem~4.5]{BM23}. 
\end{remark}

Applying Quillen's Theorem A to the inclusions in Theorem~\ref{thm:intro-homotopy-initial-final}, we in turn get the following. 
\begin{corollary}
Let $j\colon\cA\to\cB$ be an inclusion of full subcategories of $\scK_n^e(S)$ and suppose that $\cA$ contains one of the subcategories $\scM_n^{\suarrow}(S)$ or $\scM_n^{\sdarrow}(S)$. Then the induced map $\num{j}\colon\num{\cA}\to\num{\cB}$ is an equivalence. \qed
\end{corollary}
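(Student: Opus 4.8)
The plan is to obtain the corollary as an immediate consequence of Theorem~\ref{thm:intro-homotopy-initial-final} combined with Quillen's Theorem~A. I would first split into the two cases according to which preoperad $\cA$ is assumed to contain. If $\cA$ contains $\scM_n^{\suarrow}(S)$, then Theorem~\ref{thm:intro-homotopy-initial-final} guarantees that $j$ is homotopy final, so that the undercategory $(b\sdarrow j)$ has contractible geometric realization for every object $b$ of $\cB$. Symmetrically, if $\cA$ contains $\scM_n^{\sdarrow}(S)$, then $j$ is homotopy initial and every overcategory $(j\sdarrow b)$ has contractible realization.

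The second and final step is to invoke Quillen's Theorem~A, which upgrades either of these contractibility conditions to the desired conclusion: a functor whose undercategories (respectively overcategories) all have contractible geometric realization induces an equivalence $\num{\cA}\to\num{\cB}$. Applying the relevant form of the theorem in each of the two cases then yields that $j\colon\num{\cA}\to\num{\cB}$ is an equivalence.

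There is essentially no obstacle here, since the entire difficulty has already been absorbed into Theorem~\ref{thm:intro-homotopy-initial-final}; the corollary is a formal distillation of that result. The only point worth a brief remark is that Theorem~A is being used in both its covariant and contravariant guises. That the overcategory version is legitimate follows formally from the observation, recorded in Section~\ref{sec:homotopy-initial-final}, that $j$ is homotopy initial exactly when $j^{\op}$ is homotopy final, combined with the canonical homeomorphism $\num{\cA}\cong\num{\cA^{\op}}$ obtained by reversing the order of simplices in the nerve. Thus the homotopy-initial case is reduced to the homotopy-final one without any further work.
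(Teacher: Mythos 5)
Your proposal is correct and matches the paper's own argument exactly: the paper derives this corollary in one line by applying Quillen's Theorem~A (in both its under- and overcategory forms) to the homotopy finality/initiality statements of Theorem~\ref{thm:intro-homotopy-initial-final}. Your additional remark reducing the homotopy-initial case to the homotopy-final one via $j^{\op}$ and $\num{\cA}\cong\num{\cA^{\op}}$ is a harmless elaboration of the same standard duality the paper takes for granted.
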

Finally, we state the consequence for (pre)operads which motivates our work.

\begin{theorem}\label{thm:suboperad-Kne-equivalence}
Let $\scP\to\scQ$ be an inclusion of categorical sub(pre)operads of $\scK^e_n$ and suppose that $\scP$ contains one of the preoperads $\scM_n^{\suarrow}$ or $\scM_n^{\sdarrow}$. Then $\num{\scP}\to \num{\scQ}$ is an equivalence of topological (pre)operads.\qed
\end{theorem}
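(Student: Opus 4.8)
The plan is to reduce the statement to the Corollary above, applied one operad degree at a time. Indeed, by the convention recalled in Section~\ref{subsec:notation-conventions} a map of topological operads is an equivalence precisely when it is an equivalence in each arity, and the geometric realization of a categorical (pre)operad is performed degreewise; the same degreewise convention governs preoperads. So it suffices to show that $\num{\scP(k)}\to\num{\scQ(k)}$ is an equivalence for every $k$.

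First I would unwind what it means for $\scP$ and $\scQ$ to be categorical sub(pre)operads of $\scK_n^e$: for each finite set $S$ the category $\scP(S)$ is a full subcategory of $\scK_n^e(S)$, hence a partially ordered subset, and similarly $\scP(S)\subseteq\scQ(S)\subseteq\scK_n^e(S)$. The hypothesis that $\scP$ contains $\scM_n^{\suarrow}$ (respectively $\scM_n^{\sdarrow}$) is an arity-by-arity condition and therefore supplies inclusions $\scM_n^{\suarrow}(S)\subseteq\scP(S)$ (respectively $\scM_n^{\sdarrow}(S)\subseteq\scP(S)$) for every $S$. Fixing $k$ and setting $S=\{1,\dots,k\}$, the inclusion $\scP(k)\to\scQ(k)$ is then an inclusion of partially ordered subsets of $\scK_n^e(k)$ whose source contains $\scM_n^{\suarrow}(k)$ or $\scM_n^{\sdarrow}(k)$. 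The Corollary applies verbatim and shows that $\num{\scP(k)}\to\num{\scQ(k)}$ is an equivalence; as this holds in every degree, $\num{\scP}\to\num{\scQ}$ is an equivalence of topological (pre)operads.

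The hard part is not located here at all: all the genuine content has already been absorbed into the Corollary, and ultimately into Theorem~\ref{thm:homotopy-final-initial}, so what remains is essentially bookkeeping. The one point that warrants care is the translation of the containment of (pre)operads $\scM_n^{\suarrow}\subseteq\scP$ into the family of poset inclusions $\scM_n^{\suarrow}(S)\subseteq\scP(S)$ used to invoke the Corollary in each degree. In the preoperad case I would additionally observe that the restriction operators are inherited from those of $\scK_n^e$ and hence strictly commute with the inclusions, so that $\num{\scP}\to\num{\scQ}$ is indeed a map of preoperads; but these operators do not enter the equivalence assertion, which is tested in each arity in isolation.
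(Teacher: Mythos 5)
Your proposal is correct and is essentially the paper's own argument: the theorem is stated with a \qed precisely because it follows by applying the preceding Corollary arity by arity, using that sub(pre)operads of $\scK_n^e$ are by definition families of full subcategories (hence partially ordered subsets) and that equivalences of topological (pre)operads are tested degreewise. Your additional remark that the restriction operators are inherited from $\scK_n^e$, so the map is genuinely one of (pre)operads, is the same routine bookkeeping the paper leaves implicit.
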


We now turn to the proof of Theorem~\ref{thm:homotopy-final-initial}. Notice first that the duality diagram \eqref{eq:duality-diagram} shows it suffices to prove $\scM_n^{\sdarrow}(S) \to\scK_n^e(S)$ is homotopy initial. The technical crux in the proof is to verify the latter statement for $n=2$. In the present section we employ a formal induction argument to reduce the general case of the theorem to the case $n=2$, whereas the proof of the latter case is postponed until Section~\ref{sec:n=2proof}. 

For $n=1$, the inclusions in Theorem~\ref{thm:homotopy-final-initial} are equalities and therefore both homotopy final and homotopy initial. In the following we assume $n\geq 2$. We begin by making some general comments on the objects in 
$\scM_n^{\sdarrow}(S)$. Given a subset $S'$ of $S$ and an objects $\mu$ in $\scK_n^e(S)$, we write $\mu_{S'}$ for the restriction to an object in $\scK_n^e(S')$. If $\mu$ is an object in $\scM_n^{\sdarrow}(S)$, then $\mu_{S'}$ is an object in $\scM_n^{\sdarrow}(S')$. By an ordered partition of $S$ we understand a natural number $p\geq 1$ and a $p$-tuple $(S_1,\dots,S_p)$ of non-empty disjoint subsets of $S$ such that $\cup_{i=1}^pS_i=S$. We observe that given an object $\mu$ in 
$\scM_n^{\sdarrow}(S)$, there exists a unique ordered partition $(S_1,\dots,S_p)$ such that $\mu=\mu_{S_1}\Box_1\dots\Box_1\mu_{S_p}$ and $\mu_{S_i}$ is an object in $\scM_{[2,n]}^{\sdarrow}(S_i)$ for all $i$. Now let $\omega$ be an object in $\scK_n^e(S)$ and consider the category $(\scM_n^{\sdarrow}(S)\sdarrow \omega)$. By definition, this is the partially ordered set of objects $\mu$ in $\scM_n^{\sdarrow}(S)$ such that there exists a morphism $\mu\to\omega$ in $\scK_n^e(S)$. It follows from the above discussion that an object in $(\scM_n^{\sdarrow}(S)\sdarrow \omega)$ is uniquely determined by the following data:
\begin{itemize}
\item
an ordered partition $(S_1,\dots,S_p)$ with the property that for each pair $\{x,y\}$ in $S$ such that $\omega\{x,y\}=1$ and $\vec\omega\{x,y\}=(x,y)$, we have $x\in S_{i_1}$ and $y\in S_{i_2}$ for some $i_1<i_2$, and
 \item
for each $1\leq i\leq p$ an object $\mu_i$ in $(\scM_{[2,n]}^{\sdarrow}(S_i)\sdarrow \omega_{S_i})$.
 \end{itemize}
Notice that the first condition on $(S_1,\dots,S_p)$ implies that the restriction $\omega_{S_i}$ is indeed an object in $\scK^e_{[2,n]}(S_i)$. From this point of view there exists a morphism
\[
\big( (S_1,\mu_1),\dots,(S_p,\mu_p)\big) \to \big((T_1,\nu_1)\dots,(T_q,\nu_q)\big)
\]
in $(\scM_n^{\sdarrow}(S)\sdarrow \omega)$ if and only if
\begin{itemize}
\item
for each $1\leq i\leq p$ we have $S_i\subseteq T_j$ and  a morphism $\mu_i\to (\nu_j)_{S_i}$ in $\scM_{[2,n]}^{\sdarrow}(S_i)$ for some $1\leq j\leq q$, and  
\item
if $S_{i_1}\subseteq T_{j_1}$ and $S_{i_2}\subseteq T_{j_2}$ for $i_1<i_2$, then $j_1\leq j_2$.
\end{itemize}

Using this description, we shall exhibit $(\scM_n^{\sdarrow}(S)\sdarrow\omega )$ as a certain Grothendieck construction. We begin by reviewing the relevant definitions. Consider in general a small category $\cA$ and a functor $F\colon \cA^{\op}\to\Cat$ to the category $\Cat$ of small categories. We write $\cA\ltimes F$ for the category (the Grothendieck construction) defined as follows: An object is a pair $(a,x)$ consisting of an object $a$ in $\cA$ and an object $x$ in $F(a)$, and a morphism $(\alpha,f)\colon (a,x)\to (b,y)$ is a pair of morphisms $\alpha\colon a\to b$ in $\cA$ and $f\colon x\to F(\alpha)(y)$ in $F(a)$. Given a further morphism $(\beta,g)\colon (b,y)\to (c,z)$, composition is defined by $(\beta,g)\circ(\alpha,f)=(\beta\circ\alpha,F(\alpha)(g)\circ f)$. The identity morphism on an object $(a,x)$ is the pair of identity morphisms $(\id_a,\id_x)$. 
By Thomason's homotopy colimit theorem \cite{Thomason}, the homotopy type of the nerve $N(\cA\ltimes F)$ can be expressed in terms of the categories $F(a)$. In the present situation this takes the form of a natural equivalence
\begin{equation}\label{eq:Thomason-hocolim}
\textstyle\hocolim_{\cA^{\op}}NF\simeq N(\cA\ltimes F).
\end{equation}

\begin{remark}
The relationship between $\cA\ltimes F$ and the version of the Grothendieck construction considered by Thomason is as follows: Let $(\ )^{\op}\colon \Cat\to \Cat$ be the canonical functor that takes a small category $\cX$ to its opposite $\cX^{\op}$, and let $(F)^{\op}$ be the composite functor that takes an object $a$ in $\cA$ to $F(a)^{\op}$. The opposite category $(\cA\ltimes F)^{\op}$ can then be identified with the category $(F)^{\op}\!\int\!\cA^{\op}$ analyzed by Thomason \cite{Thomason} with the notation used in that paper. Using that the geometric realizations of $N\cX$ and $N\cX^{\op}$ are naturally homeomorphic, the equivalence in \eqref{eq:Thomason-hocolim} can thus be derived from the statement of Thomason's theorem. (Alternatively, one may rework Thomason's equivalence in the present situation).
\end{remark}

Given an object $\omega$ in $\scK^e_n(S)$, we define a partially ordered set $P(\omega)$ as follows: An object of $P(\omega)$ 
is an ordered partition $(S_1,\dots,S_p)$ with the property that for each pair $\{x,y\}$ in $S$ such that $\omega\{x,y\}=1$ and $\vec\omega\{x,y\}=(x,y)$,  we have $x\in S_{i_1}$ and $y\in S_{i_2}$ for some $i_1<i_2$. There is a morphism $(S_1,\dots,S_p)\to (T_1,\dots,T_q)$ in $P(\omega)$ if (i) for each $1\leq i\leq p$ we have $S_i\subseteq T_j$ for some $1\leq j\leq q$, and (ii) if $S_{i_1}\subseteq T_{j_1}$ and $S_{i_2}\subseteq T_{j_2}$ for $i_1<i_2$, then $j_1\leq j_2$. Using that $\omega_{S_i}$ is an object in $\scK^e_{[2,n]}(S_i)$, we define a functor $ F_{\omega}\colon P(\omega)^{\op}\to\Cat$ by setting
\[
\textstyle F_{\omega}(S_1,\dots,S_p)=\prod_{i=1}^p(\scM_{[2,n]}^{\sdarrow}(S_i)\sdarrow\omega_{S_i}).
\]
For a morphism $(S_1,\dots,S_p)\to(T_1,\dots,T_q)$ of the type described above, the functoriality of the construction is specified by requiring that for each $1\leq i\leq p$ there is a commutative diagram of the form
\[
\xymatrix@-5pt{
F_{\omega}(T_1,\dots,T_q) \ar[r] \ar[d]& F_{\omega}(S_1,\dots,S_p)\ar[d]\\
 (\scM^{\sdarrow}_{[2,n]}(T_j)\sdarrow \omega_{T_j}) \ar[r] & (\scM^{\sdarrow}_{[2,n]}(S_i)\sdarrow \omega_{S_i}). 
}
\]
Here the index $1\leq j\leq q$ is uniquely determined by the condition $S_i\subseteq T_j$, the vertical functors are the projections, and the horizontal functor at the bottom is given by restriction to $S_i$.
It follows from our description of $(\scM_n^{\sdarrow}(S)\sdarrow \omega)$ that this functor provides the promised recursive presentation of the latter as a Grothendieck construction. 

\begin{proposition}\label{prop:Jn(S)-over-Grothendieck}
There is an isomorphism
$
P(\omega)\ltimes F_{\omega} \cong (\scM_n^{\sdarrow}(S)\sdarrow \omega).
$
\qed
\end{proposition}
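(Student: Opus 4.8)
The plan is to observe that both $(\scM_n^{\sdarrow}(S)\sdarrow \omega)$ and $P(\omega)\ltimes F_{\omega}$ are partially ordered sets, so it suffices to produce a bijection on objects that both preserves and reflects the ordering; such a bijection is automatically an isomorphism of categories. Indeed, $(\scM_n^{\sdarrow}(S)\sdarrow \omega)$ is a poset because $\scM_n^{\sdarrow}(S)$ is, and $P(\omega)\ltimes F_{\omega}$ is a poset because $P(\omega)$ is a poset and each value $F_{\omega}(S_1,\dots,S_p)$ is a finite product of posets, so that in the Grothendieck construction there is at most one morphism between any two objects.

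On objects, I would use the unique decomposition recorded earlier: every $\mu$ in $\scM_n^{\sdarrow}(S)$ factors uniquely as $\mu=\mu_{S_1}\Box_1\cdots\Box_1\mu_{S_p}$ for an ordered partition $(S_1,\dots,S_p)$ with each $\mu_{S_i}$ in $\scM_{[2,n]}^{\sdarrow}(S_i)$. Sending such a $\mu$, together with its structure map to $\omega$, to the pair $\big((S_1,\dots,S_p),(\mu_{S_1},\dots,\mu_{S_p})\big)$ gives the desired assignment. The requirement that a morphism $\mu\to\omega$ exist translates, at the level of the factors, precisely into the condition defining the objects of $P(\omega)$ together with the requirement that each $\mu_{S_i}$ lie in $(\scM_{[2,n]}^{\sdarrow}(S_i)\sdarrow \omega_{S_i})$; this is exactly the data parametrizing an object of $P(\omega)\ltimes F_{\omega}$. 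Uniqueness of the decomposition shows the assignment is bijective, its inverse reassembling the factors via the iterated product $\Box_1$.

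It then remains to check that the two orderings agree, and here I would simply compare the two explicit morphism descriptions already given. Unwinding the definition of the Grothendieck construction, a morphism in $P(\omega)\ltimes F_{\omega}$ from $\big((S_1,\dots,S_p),(\mu_i)\big)$ to $\big((T_1,\dots,T_q),(\nu_j)\big)$ consists of a morphism $\alpha\colon (S_1,\dots,S_p)\to(T_1,\dots,T_q)$ in $P(\omega)$, that is, the refinement condition $S_i\subseteq T_{j(i)}$ subject to clause (ii) of $P(\omega)$, together with a morphism in the product poset $F_{\omega}(S_1,\dots,S_p)$ from $(\mu_i)$ to $F_{\omega}(\alpha)\big((\nu_j)\big)$. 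By the definition of $F_{\omega}$ on morphisms, the latter target has $i$-th coordinate $(\nu_{j(i)})_{S_i}$, so such a morphism exists if and only if there is a morphism $\mu_i\to(\nu_{j(i)})_{S_i}$ in $\scM_{[2,n]}^{\sdarrow}(S_i)$ for every $i$. This is verbatim the condition characterizing morphisms in $(\scM_n^{\sdarrow}(S)\sdarrow \omega)$.

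The argument carries no genuine mathematical obstacle; it is a matter of carefully aligning the two bookkeeping schemes. If there is a delicate point, it is to confirm that the functoriality prescribed for $F_{\omega}$, namely restriction along the inclusions $S_i\subseteq T_{j(i)}$, is exactly what is needed for the morphism in the product poset to encode the maps $\mu_i\to(\nu_{j(i)})_{S_i}$, and that clause (ii) of $P(\omega)$ coincides with clause (ii) of the morphism description of $(\scM_n^{\sdarrow}(S)\sdarrow \omega)$. Both of these are immediate from the definitions, so the comparison goes through and yields the asserted isomorphism.
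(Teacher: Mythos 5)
Your proposal is correct and follows essentially the same route as the paper: the paper states this proposition with no separate proof precisely because its preceding discussion already identifies the objects of $(\scM_n^{\sdarrow}(S)\sdarrow \omega)$ with pairs consisting of an ordered partition in $P(\omega)$ and a tuple in $F_{\omega}(S_1,\dots,S_p)$ (via the unique $\Box_1$-decomposition), and matches the two morphism descriptions exactly as you do. Your additional observation that both sides are posets, so an order-preserving and order-reflecting bijection suffices, is a clean way of packaging the same verification.
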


\begin{proof}[Proof of Theorem~\ref{thm:homotopy-final-initial} assuming the case $n=2$]
The proof is by induction on $n$, assuming the cases $n=1,2$. Using the description of  $(\scM_n^{\sdarrow}(S)\sdarrow \omega)$ as a Grothendieck construction in Proposition~\ref{prop:Jn(S)-over-Grothendieck}, Thomason's homotopy colimit theorem \eqref{eq:Thomason-hocolim} gives rise to an equivalence
\[
\textstyle N(\scM_n^{\sdarrow}(S)\sdarrow \omega)\simeq \hocolim_{P(\omega)^{\op}}NF_{\omega}.
\]
For each object $(S_1,\dots,S_p)$, we have
\[
\textstyle NF_{\omega}(S_1,\dots,S_p)=\prod_{i=1}^pN(\scM_{[2,n]}^{\sdarrow}(S_i)\sdarrow\omega_{S_i}),
\]
and arguing by induction, we may assume each of the factors on the right hand side to be contractible.
There results an equivalence
\[
\textstyle N(\scM_n^{\sdarrow}(S)\sdarrow \omega)\simeq \hocolim_{P(\omega)^{\op}}*\simeq NP(\omega).
\]
Finally, we observe that $P(\omega)$ itself has the form $(\scM_2^{\sdarrow}(S)\sdarrow \omega')$ for a suitable chosen object $\omega'$ in $\scK_2^e(S)$. Consider the binary relation on $S$ defined by $x<_1y$ if $\omega\{x,y\}=1$ and $\vec\omega\{x,y\}=(x,y)$. Since this relation is acyclic by assumption, there exists an extension to a linear ordering of $S$ for which we use the same notation. We then define $\omega'$ by
\[
\omega'\{x,y\}=
\begin{cases}
1,& \text{if $\omega\{x,y\}=1$},\\
2,& \text{if $\omega\{x,y\}\geq 2$},
\end{cases}
\]
and $\vec\omega'\{x,y\}=(x,y)$ if $x<_1y$. Notice that if $(S_1,\dots,S_p)$ is an object in $P(\omega)$, then the category $(\scM_{[2,2]}^{\sdarrow}(S_i)\sdarrow \omega'_{S_i})$ has the identity of $\omega'_{S_i}$ as its only object. This shows that $P(\omega)$ can be identified with $(\scM_2^{\sdarrow}(S)\sdarrow \omega')$ which in turn concludes the argument
\end{proof}

\section{The proof of Theorem~\ref{thm:homotopy-final-initial} for $n=2$}\label{sec:n=2proof}
Let $S$ be a finite set and let $\omega$ be a fixed object of $\scK^e_2(S)$. In this section we write $P(\omega)$ for the partially ordered set $(\scM_2^{\sdarrow}(S)\sdarrow \omega)$ and finish the proof of Theorem~\ref{thm:homotopy-final-initial} by showing that its geometric realization is  contractible. Our first objective will be to define a certain abstract simplicial complex (in the classical sense \cite[Chapter~3]{Spanier}) which represents the homotopy type of $\num{P(\omega)}$. We shall then show this simplicial complex to be contractible by exhibiting a sequence of elementary collapses which reduces it to a single element. 

Recall that an element in $P(\omega)$ is an ordered partition $\ang{S_i}_{i=1}^p$ with the property that for each pair $\{x,y\}$ in $S$ such that $\omega\{x,y\}=1$ and $\vec\omega\{x,y\}=(x,y)$, we have $x\in S_{i_1}$ and $y\in S_{i_2}$ for some $i_1<i_2$. There is a morphism $\ang{S_i}_{i=1}^p\to \ang{T_i}_{i=1}^q$ if and only if (i) for each $1\leq i\leq p$ we have $S_i\subseteq T_j$ for some $1\leq j\leq q$, and (ii) if $S_{i_1}\subseteq T_{j_1}$ and $S_{i_2}\subseteq T_{j_2}$ for $i_1<i_2$, then $j_1\leq j_2$. We introduce a binary relation $\Cap$ on the set $P(\omega)$ as follows: Given elements $v=\ang{S_i}_{i=1}^p$ and $w=\ang{T_i}_{i=1}^q$, we write $v\Cap w$ if the condition $S_{i_1}\cap T_{j_2}= \emptyset$ or $S_{i_2}\cap T_{j_1}=\emptyset$ is satisfied for each set of indices $1\leq i_1<i_2\leq p$ and $1\leq j_1< j_2\leq q$. This relation is reflexive and symmetric but not transitive in general. With $v$ and $w$ as above, consider the set
\[
I_{(v,w)}=\{(i,j)\in \{1,\dots,p\}\times \{1,\dots,q\}\colon S_i\cap T_j\neq \emptyset\}.
\] 
Notice that if the relation $v\Cap w$ holds, then the product ordering of the elements $(i,j)$ restricts to a linear ordering of $I_{(v,w)}$. Hence the tuple
\begin{equation}\label{eq:v-w-infimum}
\ang{S_i\cap T_j\colon (i,j)\in I_{(v,w)}}
\end{equation}
defines an ordered partition of $S$. It is not difficult to verify the following lemma.
\begin{lemma}
If the relation $v\Cap w$ holds, then \eqref{eq:v-w-infimum} defines an element in $P(\omega)$ which represents the infimum of $v$ and $w$.\qed
\end{lemma}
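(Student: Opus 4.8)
The plan is to prove the lemma in two stages: first establish that the tuple in \eqref{eq:v-w-infimum} is a bona fide element of $P(\omega)$, and then verify that it is the greatest lower bound of $v$ and $w$ in the partial order. For the first stage, I would argue that \eqref{eq:v-w-infimum} is an \emph{ordered} partition. The sets $S_i\cap T_j$ for $(i,j)\in I_{(v,w)}$ are by construction non-empty, pairwise disjoint (since the $S_i$ are disjoint among themselves and likewise the $T_j$), and their union is $S$ (every element lies in exactly one $S_i$ and one $T_j$). As noted in the text, the hypothesis $v\Cap w$ guarantees that the product order restricts to a \emph{linear} order on the index set $I_{(v,w)}$, so the tuple is genuinely a $p$-tuple of the required form. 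To confirm membership in $P(\omega)$, I would check the defining constraint: for each pair $\{x,y\}$ with $\omega\{x,y\}=1$ and $\vec\omega\{x,y\}=(x,y)$, one must have $x$ in an earlier block than $y$. Since $v\in P(\omega)$, the elements $x,y$ lie in blocks $S_{i_1}$ and $S_{i_2}$ with $i_1<i_2$ (or the same block, but the constraint forces distinct blocks here); combining this with the analogous statement for $w$ and the linear order on $I_{(v,w)}$ yields that the block containing $x$ precedes the block containing $y$ in \eqref{eq:v-w-infimum}.

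For the second stage I would verify the universal property of the infimum. Write $u$ for the ordered partition \eqref{eq:v-w-infimum}. First I would show $u\leq v$ and $u\leq w$ by checking conditions (i) and (ii) of the morphism relation: each block $S_i\cap T_j$ of $u$ is contained in $S_i$ (and in $T_j$), giving (i), while the order-compatibility (ii) follows because the linear order on $I_{(v,w)}$ refines projection to the first coordinate (respectively the second). Then, given any $z=\ang{U_k}$ with $z\leq v$ and $z\leq w$, I would show $z\leq u$: each block $U_k$ maps into some $S_i$ and into some $T_j$, hence into $S_i\cap T_j=$ a block of $u$, establishing (i), and the order condition (ii) for $z\leq u$ follows by combining the order conditions for $z\leq v$ and $z\leq w$. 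This exhibits $u$ as the infimum.

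The main obstacle I anticipate is bookkeeping around the order condition (ii) and, more subtly, confirming that the relation $v\Cap w$ is exactly the condition under which an infimum exists as an \emph{ordered} partition. The delicate point is that two blocks $S_{i_1}\cap T_{j_1}$ and $S_{i_2}\cap T_{j_2}$ of $u$ could a priori be incomparable in the product order, which would prevent them from being linearly arranged into a tuple; the hypothesis $v\Cap w$ is precisely what rules out the configuration $i_1<i_2$ with $j_1>j_2$ occurring on non-empty intersections, and I would make this implication explicit. Once the linearity of the order on $I_{(v,w)}$ is pinned down, the remaining verifications are the routine transitivity-style arguments sketched above, and these I would present compactly rather than in full detail.
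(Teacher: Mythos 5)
Your proof is correct, and it matches the paper's approach in the only sense available: the paper states this lemma without proof (prefacing it with ``It is not difficult to verify the following lemma''), and your two-stage verification---membership in $P(\omega)$ via the linearity of the product order on $I_{(v,w)}$, followed by the greatest-lower-bound property checked against conditions (i) and (ii) of the morphism relation---is exactly the routine argument being left to the reader. No gaps.
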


For each element $v$ in $P(\omega)$, let $P_v$ denote the overcategory $(P(\omega)\sdarrow v)$ with the induced partial ordering. Clearly these subsets define a covering of $P(\omega)$. 

\begin{lemma}\label{lem:intersection-Cap}
The intersection $P_v\cap P_w$ is non-empty if and only if the relation $v\Cap w$ holds. In this case $P_v\cap P_w=P_u$, where $u$ is the infimum \eqref{eq:v-w-infimum} of $v$ and $w$.
\end{lemma}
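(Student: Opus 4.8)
\begin{proof}[Proof proposal]
The plan is to reduce everything to the order theory of the poset $P(\omega)$, using the preceding lemma for the nontrivial direction. Recall first that $P_v=(P(\omega)\sdarrow v)$ consists precisely of those $u$ admitting a morphism $u\to v$, that is, of the elements $u\leq v$; thus $P_v$ is the principal down-set of $v$, and $P_v\cap P_w$ is exactly the set of common lower bounds of $v$ and $w$. With this reformulation both assertions become statements about lower bounds.
\end{proof}

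For the implication $v\Cap w\Rightarrow P_v\cap P_w\neq\emptyset$, together with the identification of the intersection, I would invoke the preceding lemma directly: when $v\Cap w$ holds, \eqref{eq:v-w-infimum} produces an element $u\in P(\omega)$ that is the infimum of $v$ and $w$. In particular $u\leq v$ and $u\leq w$, so $u\in P_v\cap P_w$, giving non-emptiness. The equality $P_v\cap P_w=P_u$ is then the standard poset fact that, once the greatest lower bound $u=\inf(v,w)$ exists, the set of common lower bounds of $v$ and $w$ coincides with the down-set of $u$: any common lower bound is $\leq u$ by the universal property, and conversely every element $\leq u$ is both $\leq v$ and $\leq w$.

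The substantive direction is $P_v\cap P_w\neq\emptyset\Rightarrow v\Cap w$, and this is where the combinatorics of ordered partitions enters. Suppose $u=\ang{U_k}_{k=1}^r$ lies in $P_v\cap P_w$, with $v=\ang{S_i}_{i=1}^p$ and $w=\ang{T_i}_{i=1}^q$. From the morphisms $u\to v$ and $u\to w$ I would extract, using part (i) of the morphism description, the unique indices $i(k)$ and $j(k)$ with $U_k\subseteq S_{i(k)}$ and $U_k\subseteq T_{j(k)}$ (unique because the blocks of $v$ and of $w$ are disjoint), and record that by part (ii) the assignments $k\mapsto i(k)$ and $k\mapsto j(k)$ are weakly order-preserving. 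Now argue by contradiction: if $v\Cap w$ failed, there would be indices $i_1<i_2$ and $j_1<j_2$ together with elements $a\in S_{i_1}\cap T_{j_2}$ and $b\in S_{i_2}\cap T_{j_1}$. Locating $a$ and $b$ in blocks $U_{k_a}$ and $U_{k_b}$ of $u$, disjointness of the $S_i$ forces $i(k_a)=i_1$ and $i(k_b)=i_2$, and disjointness of the $T_j$ forces $j(k_a)=j_2$ and $j(k_b)=j_1$; since $i_1\neq i_2$ the two blocks lie in disjoint sets and so $k_a\neq k_b$. Monotonicity of $k\mapsto i(k)$ then yields $k_a<k_b$ (as $k_b<k_a$ would give $i_2=i(k_b)\leq i(k_a)=i_1$), whereupon monotonicity of $k\mapsto j(k)$ gives $j_2=j(k_a)\leq j(k_b)=j_1$, contradicting $j_1<j_2$. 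Hence $v\Cap w$ holds.

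I expect the only genuine obstacle to be the bookkeeping in this last paragraph, namely keeping the two monotone index assignments and their interaction straight, while the remaining steps are formal consequences of the preceding lemma and elementary poset theory.
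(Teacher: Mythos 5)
Your proposal is correct and takes essentially the same approach as the paper: the forward implication and the identification $P_v\cap P_w=P_u$ follow from the preceding infimum lemma together with the standard fact about down-sets of a greatest lower bound, and the converse is exactly the paper's (terse) assertion that when $v\Cap w$ fails no element fits in a diagram $v\gets u\to w$. Your contradiction argument with the two weakly monotone index assignments $k\mapsto i(k)$ and $k\mapsto j(k)$ is a correct, fully detailed version of what the paper leaves to the reader.
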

\begin{proof}
If the relation $v\Cap w$ holds, then the element $u$ defined by \eqref{eq:v-w-infimum} is in $P_v\cap P_w$ which is therefore non-empty. Conversely, if the condition for the relation $v\Cap w$ fails, then there is no element $u$ that fits in a diagram $v\gets u\to w$ in $P(\omega)$.
\end{proof}

Now we use the binary relation $\Cap$ to define a simplicial complex $CP(\omega)$. This has $P(\omega)$ as its set of vertices and a non-empty subset $V\subseteq P(\omega)$ is a simplex if the relation $v\Cap w$ holds for each pair of elements $v,w\in V$.  The next proposition allows us to concentrate on the symmetric relation $\Cap$ instead of the somewhat complicated notion of a morphism in $\scM_2^{\sdarrow}(S)$.

\begin{proposition}
The geometric realization of the simplicial complex $CP(\omega)$ is equivalent to $\num{P(\omega)}$.
\end{proposition}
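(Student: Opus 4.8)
The plan is to recognize $CP(\omega)$ as the nerve of the cover of $P(\omega)$ by the overcategories $P_v$ and then invoke the nerve lemma. First I would record the two standing facts. Each $P_v=(P(\omega)\sdarrow v)$ has $v$ as a terminal object, so its nerve is a cone and $\num{P_v}$ is contractible. And since the order complex of $P(\omega)$ (whose realization is $\num{P(\omega)}$) has as its simplices the chains $w_0<\dots<w_k$, every such chain lies in $P_{w_k}$; hence the subcomplexes $\{\,\text{order complex of }P_v\,\}_{v\in P(\omega)}$ genuinely cover the order complex of $P(\omega)$. The intersection of the subcomplexes indexed by $v_0,\dots,v_k$ is the order complex of $P_{v_0}\cap\dots\cap P_{v_k}$, so the combinatorics of the cover is governed entirely by intersections of the $P_v$.

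The heart of the matter is to show these intersections behave as required. By Lemma~\ref{lem:intersection-Cap} a pairwise intersection $P_v\cap P_w$ is nonempty exactly when $v\Cap w$, and then equals $P_u$ for the infimum $u$, hence is again contractible. The hard part will be upgrading this from pairs to arbitrary finite families: I must show that if $v_i\Cap v_j$ holds for every pair, then a simultaneous common refinement exists in $P(\omega)$, so that $\bigcap_m P_{v_m}=P_u$ for a single infimum $u$. The key observation is that each ordered partition $v_m=\ang{S_i^{(m)}}$ induces a total preorder on $S$, and the relation $v\Cap w$ says precisely that two such preorders are consistent, i.e.\ there is no pair $x,y\in S$ with $x$ strictly before $y$ under $v$ and strictly before $x$ under $w$. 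The candidate infimum is the ordered partition whose blocks are the nonempty intersections $\bigcap_m S_{i_m}^{(m)}$, ordered by the product order on the index tuples. This product order can fail to be linear only if two blocks carry tuples disagreeing in sign in two coordinates, and any such disagreement is witnessed by a single pair $x,y\in S$ ordered oppositely by two of the $v_m$ — exactly what pairwise $\Cap$ forbids. So the obstruction is always ``pairwise'', and pairwise $\Cap$ forces the product order to be linear. The resulting tuple is then a genuine ordered partition lying in $P(\omega)$ (the defining $\omega$-constraint is inherited, since it holds strictly in each $v_m$), and it is the infimum of the $v_m$.

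With this Helly-type step in hand, the nonempty finite intersections of the cover are precisely the $P_u$, all contractible, while a set $\{v_0,\dots,v_k\}$ indexes a nonempty intersection if and only if the $v_m$ are pairwise $\Cap$-related — which is exactly the condition defining a simplex of $CP(\omega)$. Thus the nerve of the cover is $CP(\omega)$. I would then conclude by the nerve lemma: for a simplicial complex covered by subcomplexes whose nonempty finite intersections are all contractible, the realization is homotopy equivalent to the realization of the nerve of the cover. (One may justify this in the present formalism by noting that $\num{P(\omega)}$ is the homotopy colimit of the diagram of these intersections — each a contractible $\num{P_u}$ — so it is equivalent to the realization of the indexing nerve $CP(\omega)$.) Applying this to the cover $\{P_v\}$ yields $\num{P(\omega)}\simeq\num{CP(\omega)}$, as claimed.
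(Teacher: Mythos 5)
Your proof is correct, and its skeleton is the same as the paper's: cover the order complex of $P(\omega)$ by the order complexes of the overcategories $P_v$, identify the nerve of this cover with $CP(\omega)$, and apply Bj\"orner's nerve lemma. Where you genuinely differ is in the Helly-type step showing that pairwise non-empty intersections force $P_{v_1}\cap\dots\cap P_{v_k}$ to be non-empty (and equal to a single contractible $P_u$). The paper proves this by induction on $k$: the pairwise case is Lemma~\ref{lem:intersection-Cap}, the case $k=3$ is handled by an index analysis (a failure of $u\Cap v_3$, for $u$ the infimum of $v_1,v_2$, is witnessed in one of the two coordinates of the product index set and hence transfers to a failure of $v_1\Cap v_3$ or of $v_2\Cap v_3$), and general $k$ follows by replacing $v_1,v_2$ with their infimum and recursing. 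You instead build the simultaneous infimum in one stroke: writing each $v_m$ as a surjection $\alpha_m\colon S\to\{1,\dots,p_m\}$, the blocks of the common refinement are the non-empty fibers of $x\mapsto(\alpha_1(x),\dots,\alpha_k(x))$, and pairwise $\Cap$ forces the product order on the occurring tuples to be linear, because any incomparability between two tuples is witnessed by a single pair $x,y$ ordered oppositely by two of the $v_m$. Your version is cleaner and makes the Helly phenomenon transparent --- the obstruction to a common refinement is intrinsically pairwise --- at the cost of having to verify directly that the refinement lies in $P(\omega)$ (the $\omega$-constraint is inherited, as you note) and that $\bigcap_m P_{v_m}=P_u$; the paper's inductive route avoids any new construction by reusing only the pairwise infimum formula of Lemma~\ref{lem:intersection-Cap}. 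Both arguments are complete and yield the same conclusion.
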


\begin{proof}
We shall deduce this from the nerve lemma as stated by Bj\"{o}rner~\cite{Bjorner81}. Consider the covering of  $NP(\omega)$ by the nerves $NP_v$ and observe that each finite intersection
\[
NP_{v_1}\cap\dots\cap NP_{v_k}=N(P_{v_1}\cap\dots\cap P_{v_k})
\]
is either empty or contractible. Indeed, using Lemma~\ref{lem:intersection-Cap}, it follows by induction that if $P_{v_1}\cap\dots\cap P_{v_k}$ is non-empty, then it equals $P_u$ where $u$ is the infimum of the set $\{v_1,\dots,v_k\}$. By the nerve lemma it remains to show that $\{v_1,\dots,v_k\}$ is a simplex of $CP(\omega)$ if and only if $P_{v_1}\cap\dots\cap P_{v_k}$ is non-empty. This is clear for $k=1$ and follows from Lemma~\ref{lem:intersection-Cap} for $k=2$. Consider the case $k=3$ and suppose that $P_{v_1}\cap P_{v_2}\cap P_{v_3}=\emptyset$. We claim that in this case $P_{v_i}\cap P_{v_j}=\emptyset$ for some $1\leq i,j\leq 3$. Let us write $v_1=\ang{S_i}_{i=1}^p$, $v_2=\ang{T_i}_{i=1}^q$, $v_3=\ang{U_i}_{i=1}^r$, and suppose that $P_{v_1}\cap P_{v_2}\neq\emptyset$. Then $P_{v_1}\cap P_{v_2}=P_u$ with $u$ the infimum of $v_1$ and $v_2$ defined by \eqref{eq:v-w-infimum}. Since $P_u\cap P_{v_3}=\emptyset$ we must have 
\[
(S_{i_1}\cap T_{j_1})\cap U_{h_2}\neq \emptyset \text{ and } (S_{i_2}\cap T_{j_2})\cap U_{h_1}\neq \emptyset 
\]
for some $(i_1,j_1)<(i_2,j_2)$ and $h_1<h_2$. If $i_1<i_2$ this implies that $P_{v_1}\cap P_{v_3}=\emptyset$ and if $j_1<j_2$ this implies that $P_{v_2}\cap P_{v_3}=\emptyset$. Having verified the claim for $k=3$, it now follows for general $k$ that if  $P_{v_1}\cap\dots\cap P_{v_k}$ is empty, then $P_{v_i}\cap P_{v_j}$ must be empty for some $1\leq i,j\leq k$: If $P_{v_1}\cap P_{v_2}\neq\emptyset$, then $P_{v_1}\cap P_{v_2}=P_u$ as above so that $P_u\cap P_{v_3}\cap\dots\cap P_{v_k}$ is empty and we may argue by induction.
\end{proof}

Consider in general a simplicial complex $K$ with vertex set $P$. Recall that a simplex in $K$ (that is, a finite non-empty subset of $P$) is said to be a \emph{free face} if it is properly contained in exactly one other (necessarily maximal) simplex. Thus, a free face is obtained from a maximal simplex by removing a single vertex and it does not arise by removing vertices from any other maximal simplex. An \emph{elementary collapse} in $K$ is the process of removing a pair of simplices given by a free face and the unique maximal simplex containing it. It is well-known and easy to prove that an elementary collapse does not change the homotopy type represented by a simplicial complex. 

In the case of the simplicial complex $CP(\omega)$ there are usually many free faces and any elementary collapse will leave us with a smaller complex representing the same homotopy type. However, in order to exhibit a sequence of elementary collapses reducing $CP(\omega)$ to a single vertex, we will need a systematic way of recognizing free faces so that the process can be iterated. For this reason we now introduce a new partial ordering on $P(\omega)$, different from the original partial ordering inherited from $\scM_2^{\sdarrow}(S)$. Notice that an element in $P(\omega)$ can be uniquely represented by a pair $(p,\alpha)$, where $p\geq 1$ is a natural number and $\alpha\colon S\to \{1,\dots,p\}$ is a surjective function with the property that if $\omega\{x,y\}=1$ and $\vec\omega\{x,y\}=(x,y)$, then $\alpha(x)<\alpha(y)$. Indeed, given such a pair, the corresponding ordered partition $\ang{S_i}_{i=1}^p$ is defined by the inverse images $S_i=\alpha^{-1}\{i\}$. With this notation, the relation $(p,\alpha)\Cap(q,\beta)$ holds if and only if for each pair $\{x,y\}$ in $S$ such that $\alpha(x)<\alpha(y)$, we have that $\beta(x)\leq \beta(y)$. We define a new partial ordering $\peq$ on $P(\omega)$ by 
\[
(p,\alpha)\peq(q,\beta) \quad \text{if} \quad \alpha(x)\leq \beta(x) \quad \text{for all $x\in S$}. 
\]
Notice that if $(p,\alpha)\preccurlyeq(q,\beta)$, then $p\leq q$. In terms of the ordered partition notation for $v=\ang{S_i}_{i=1}^p$ and $w=\ang{T_i}_{i=1}^q$, the relation $v\peq w$ amounts to $T_i\subseteq S_1\cup \dots\cup S_i$ for $i=1,\dots,q$ (where we adopt the convention $S_i=\emptyset$ if $p<i$).

\begin{remark}
If there exists a morphism $w\to v$ in the original partial ordering of $P(\omega)$ (inherited from $\scM_2^{\sdarrow}(S)$), then $v\peq w$. The converse does not hold in general. 
\end{remark}

Next we introduce an operation on $P(\omega)$ that to a pair of elements $v$ and $w$ associates an element $v\wedge w$. Write $v=(p,\alpha)$ and $w=(q,\beta)$, and let the function $\min\{\alpha,\beta\}\colon S\to \mathbb N_{\geq 1}$ be defined by taking $x$ to $\min\{\alpha(x),\beta(x)\}$.
Notice that if $\omega\{x,y\}=1$ and $\vec\omega\{x,y\}=(x,y)$, then 
\[
\min\{\alpha(x),\beta(x)\}<\min\{\alpha(y),\beta(y)\}.
\] 
We define $v\wedge w=(r,\alpha\wedge \beta)$ to be the element in $P(\omega)$ determined by the condition that there is a commutative diagram
\[
\xymatrix@C+10pt@R-5pt{
S \ar[r]^-{\alpha\wedge \beta} \ar[dr]_{\min\{\alpha,\beta\}\ }& \{1,\dots,r\}\ar[d]^\rho\\
& \mathbb N_{\geq 1}
}
\]
with $\rho$ being injective and order preserving. Clearly the inequalities  $v\wedge w\peq v$ and $v\wedge w\peq w$ hold in general. 

\begin{example}
Let $S=\{1,2,3,4\}$ and consider the ordered partitions 
\[
v=(\{1\},\{2\},\{3,4\}),\quad w=(\{2\},\{1\},\{3,4\}).
\]
Let $\omega$ be any object of $\scK^e_2(S)$ such that $v$ and $w$ belong to $P(\omega)$. In this case we find that $v\wedge w=(\{1,2\},\{3,4\})$. Notice that this is not the infimum of $v$ and $w$ with respect to $\peq$. In fact, both elements $(\{1,2\},\{3\},\{4\})$ and $(\{1,2\},\{4\},\{3\})$ are maximal lower bounds for $v$ and $w$, so the infimum of $v$ and $w$ with respect to the relation $\peq$ does not exist.
\end{example}

\begin{lemma}
If the relation $v\Cap w$ holds, then $v\wedge w$ is the infimum of $v$ and $w$ with respect to $\peq$. 
\end{lemma}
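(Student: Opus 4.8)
The plan is to reduce everything to a single combinatorial fact about the function $\min\{\alpha,\beta\}$. Since the inequalities $v\wedge w\peq v$ and $v\wedge w\peq w$ hold unconditionally, $v\wedge w$ is already a lower bound for $v$ and $w$, and the only thing left to prove is that it dominates every other lower bound. Writing $v=(p,\alpha)$ and $w=(q,\beta)$, an element $u=(s,\gamma)$ satisfies $u\peq v$ and $u\peq w$ precisely when $\gamma(x)\le\alpha(x)$ and $\gamma(x)\le\beta(x)$ for all $x$, that is, when $\gamma(x)\le\min\{\alpha,\beta\}(x)$ for all $x$. Hence it would suffice to know that $\alpha\wedge\beta=\min\{\alpha,\beta\}$, for then $u\peq v\wedge w$ would be immediate.

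So the crux is to show that, under the hypothesis $v\Cap w$, the relabelling map $\rho$ is the identity inclusion, equivalently that $\min\{\alpha,\beta\}$ already takes its values in an initial segment $\{1,\dots,r\}$ of $\mathbb N_{\geq 1}$ (so that no ``gaps'' need to be removed). This is the one place where the hypothesis $v\Cap w$ is genuinely used, and I expect it to be the main obstacle; the rest is a direct unwinding of the definitions. I would argue by contradiction. First observe that $1$ always lies in the image of $\min\{\alpha,\beta\}$ (take $x$ with $\alpha(x)=1$), and that every value of $\min\{\alpha,\beta\}$ is at most $\min\{p,q\}$, so that the least skipped value $m$ is $\ge 2$ and satisfies $m\le\min\{p,q\}$; consequently both $\alpha$ and $\beta$, being surjective, attain $m$. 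Pick $y$ with $\alpha(y)=m$ and $y'$ with $\beta(y')=m$. Since $m$ is not a value of $\min\{\alpha,\beta\}$, the inequalities $\min\{\alpha,\beta\}(y)\le\alpha(y)=m$ and $\min\{\alpha,\beta\}(y')\le\beta(y')=m$ are strict, forcing $\beta(y)<m$ and $\alpha(y')<m$.

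This produces $\alpha(y')<m=\alpha(y)$ together with $\beta(y')=m>\beta(y)$, i.e.\ a pair with $\alpha(y')<\alpha(y)$ but $\beta(y')>\beta(y)$. By the characterization of $\Cap$ recorded just above the statement, namely that $v\Cap w$ means $\alpha(x)<\alpha(y)$ implies $\beta(x)\le\beta(y)$, this contradicts $v\Cap w$. Hence no value is skipped, the image of $\min\{\alpha,\beta\}$ is an initial segment, $\rho=\id$, and $\alpha\wedge\beta=\min\{\alpha,\beta\}$. Combining this with the first paragraph, any lower bound $u=(s,\gamma)$ of $v$ and $w$ satisfies $\gamma(x)\le\min\{\alpha,\beta\}(x)=(\alpha\wedge\beta)(x)$ for all $x$, so $u\peq v\wedge w$; together with the lower-bound property this shows $v\wedge w$ is the infimum of $v$ and $w$ with respect to $\peq$.
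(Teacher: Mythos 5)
Your proof is correct and takes essentially the same route as the paper's: both reduce the lemma to showing that $\min\{\alpha,\beta\}$ has an initial segment as its image (so that $\alpha\wedge\beta=\min\{\alpha,\beta\}$ and any common lower bound $\gamma\le\min\{\alpha,\beta\}$ is automatically $\peq\, v\wedge w$), and both extract this from the identical $\Cap$-contradiction, namely that if $\alpha(x)=m=\beta(y)$ but neither $x$ nor $y$ realizes $m$ under the minimum, then $\alpha(y)<\alpha(x)$ while $\beta(y)>\beta(x)$. The only differences are presentational: the paper argues surjectivity onto $\{1,\dots,\min\{p,q\}\}$ directly rather than by a least skipped value, and it leaves the final infimum deduction implicit, which you spell out.
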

\begin{proof}
Let $v=(p,\alpha)$ and $w=(q,\beta)$, and write $r=\min\{p,q\}$. We claim that $v\wedge w$ is the element 
$(r,\min\{\alpha,\beta\})$ which gives the result. Thus, we must show that $\min\{\alpha,\beta\}$ defines a surjective function $S\to \{1,\dots,r\}$. Given $1\leq i\leq r$, choose $x$ and $y$ such that $\alpha(x)=i$ and $\beta(y)=i$. Then the relation $v\Cap w$ ensures that $\min\{\alpha,\beta\}$ maps at least one of the elements $x$ and $y$ to $i$.
\end{proof}
In the following it will be useful to consider the operation $v\wedge w$ also in cases where it does not represent the infimum of $v$ and $w$.

\begin{lemma}\label{lem:wedge-preserves-Cap}
Given elements $u$, $v$, and $w$ in $P(\omega)$, the relations $u\Cap v$ and $u\Cap w$ imply $u\Cap(v\wedge w)$.
\end{lemma}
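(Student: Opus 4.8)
The plan is to work entirely with the functional representation of elements of $P(\omega)$, writing $u=(m,\gamma)$, $v=(p,\alpha)$, and $w=(q,\beta)$, and to use the characterization of $\Cap$ recorded just above the definition of $\peq$: for elements $(p,\alpha)$ and $(q,\beta)$, the relation holds precisely when there is no inversion, i.e.\ when for every pair $\{x,y\}$ with $\alpha(x)<\alpha(y)$ one has $\beta(x)\le\beta(y)$ (and, as the paper notes, this condition is symmetric). The one structural fact I would isolate first is that $\alpha\wedge\beta$ has the same order type as $\min\{\alpha,\beta\}$: by construction there is an injective order-preserving $\rho$ with $\rho\circ(\alpha\wedge\beta)=\min\{\alpha,\beta\}$, and since such a $\rho$ reflects the order, one has $(\alpha\wedge\beta)(x)\le(\alpha\wedge\beta)(y)$ if and only if $\min\{\alpha,\beta\}(x)\le\min\{\alpha,\beta\}(y)$. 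This reduces checking $u\Cap(v\wedge w)$ to comparing $\gamma$ with $\min\{\alpha,\beta\}$ directly.

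With this reduction in hand, the verification is short. To prove $u\Cap(v\wedge w)$ I would take an arbitrary pair $\{x,y\}$ and assume $\gamma(x)<\gamma(y)$; the goal is then $(\alpha\wedge\beta)(x)\le(\alpha\wedge\beta)(y)$. The hypothesis $u\Cap v$, in its inversion-free form, gives $\alpha(x)\le\alpha(y)$, and similarly $u\Cap w$ gives $\beta(x)\le\beta(y)$. Taking pointwise minima yields $\min\{\alpha(x),\beta(x)\}\le\min\{\alpha(y),\beta(y)\}$, that is, $\min\{\alpha,\beta\}(x)\le\min\{\alpha,\beta\}(y)$. Transporting this inequality back through $\rho$ as above gives $(\alpha\wedge\beta)(x)\le(\alpha\wedge\beta)(y)$, which is exactly the required condition. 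Hence $u\Cap(v\wedge w)$.

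I expect essentially no serious obstacle here: the content is the elementary observation that the minimum of two functions, each nondecreasing along every strict $\gamma$-increase, is again nondecreasing along such increases, and all of the hypotheses feed in directly. The only points demanding care are purely bookkeeping. First, one must use that $\Cap$ is genuinely symmetric, so that the hypotheses may be applied in the orientation $\gamma(x)<\gamma(y)\Rightarrow\alpha(x)\le\alpha(y)$ rather than the reverse. Second, one must use that the renumbering map $\rho$, being injective and order-preserving, \emph{reflects} and not merely preserves the order; this is precisely what licenses the final transfer from $\min\{\alpha,\beta\}$ to $\alpha\wedge\beta$ and keeps the argument from relying on $v\wedge w$ being an infimum (which, per the preceding example, it need not be when $v\Cap w$ fails).
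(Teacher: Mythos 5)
Your proof is correct and is essentially the paper's own argument: assume $\gamma(x)<\gamma(y)$, use $u\Cap v$ and $u\Cap w$ to get $\alpha(x)\le\alpha(y)$ and $\beta(x)\le\beta(y)$, take pointwise minima, and conclude $(\alpha\wedge\beta)(x)\le(\alpha\wedge\beta)(y)$. The only difference is that you spell out the transfer from $\min\{\alpha,\beta\}$ to $\alpha\wedge\beta$ via the order-reflecting property of the injective order-preserving map $\rho$, a step the paper leaves implicit with ``it follows that.''
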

\begin{proof}
Write $u=(r,\gamma)$, $v=(p,\alpha)$, $w=(q,\beta)$, and assume $\gamma(x)<\gamma(y)$. Then $\alpha(x)\leq \alpha(y)$ and $\beta(x)\leq \beta(y)$, which imply
\[
\min\{\alpha(x),\beta(x)\}\leq \min\{\alpha(y),\beta(y)\}.
\]
It follows that $(\alpha\wedge\beta)(x)\leq (\alpha\wedge\beta)(y)$ as required.
\end{proof}

\begin{proposition}
\begin{description}

\item[(i)]
$P(\omega)$ has a least element with respect to $\peq$.

\item[(ii)]
Every maximal simplex in $CP(\omega)$ has a least vertex with respect to $\peq$. 
\end{description}
\end{proposition}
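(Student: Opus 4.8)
The plan is to handle both parts uniformly through the pointwise-minimum construction that underlies the operation $v\wedge w$. Writing elements of $P(\omega)$ in the form $(p,\alpha)$, the basic observation is that if $(p_1,\alpha_1),\dots,(p_k,\alpha_k)$ is any finite family of elements of $P(\omega)$, then the pointwise minimum $\mu=\min_i\alpha_i\colon S\to\mathbb N_{\geq 1}$ still satisfies the defining constraint of $P(\omega)$: given a pair $\{x,y\}$ with $\omega\{x,y\}=1$ and $\vec\omega\{x,y\}=(x,y)$, choosing an index $j$ with $\alpha_j(y)=\mu(y)$ gives $\mu(x)\leq\alpha_j(x)<\alpha_j(y)=\mu(y)$. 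Normalizing $\mu$ through the unique order-preserving injection $\rho\colon\{1,\dots,r\}\to\mathbb N_{\geq 1}$ with image $\mu(S)$ produces a genuine element $m=(r,\bar\mu)\in P(\omega)$, exactly as in the definition of $v\wedge w$. Since $\rho(k)\geq k$ for all $k$, we have $\bar\mu(x)\leq\mu(x)\leq\alpha_i(x)$, so $m\peq(p_i,\alpha_i)$ for every $i$; thus $m$ is a $\peq$-lower bound for the whole family.

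For part (i), I would apply this construction to the family of \emph{all} elements of $P(\omega)$. First I note that $P(\omega)$ is nonempty: the relation $x<_1 y$ (meaning $\omega\{x,y\}=1$ and $\vec\omega\{x,y\}=(x,y)$) is acyclic because $\omega$ lies in $\scK_2^e(S)$, hence admits a linear extension, and the corresponding discrete ordered partition is a valid element. Taking $\mu(x)=\min\{\alpha(x):(p,\alpha)\in P(\omega)\}$ and normalizing yields an element $m\in P(\omega)$ which by the lower-bound property satisfies $m\peq v$ for every $v\in P(\omega)$; hence $m$ is the least element. (Concretely $m$ is the rank function $\ell(x)=1+\max\{\ell(z):z<_1 x\}$ of the partial order generated by $<_1$.)

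For part (ii), let $V=\{v_1,\dots,v_k\}$ be a maximal simplex of $CP(\omega)$, so that $v_i\Cap v_j$ for all $i,j$, and let $m=(r,\bar\mu)$ be the normalized pointwise minimum of $v_1,\dots,v_k$. By the preliminary observation $m\in P(\omega)$ and $m\peq v_i$ for each $i$, so it only remains to show that $m$ is a \emph{vertex} of $V$. The key step, which I expect to be the main obstacle, is to verify $m\Cap v_i$ for every $i$: suppose $\bar\mu(x)<\bar\mu(y)$, equivalently $\mu(x)<\mu(y)$, and choose an index $i_0$ with $\alpha_{i_0}(x)=\mu(x)$; then $\alpha_{i_0}(x)=\mu(x)<\mu(y)\leq\alpha_{i_0}(y)$, so the relation $v_{i_0}\Cap v_i$ forces $\alpha_i(x)\leq\alpha_i(y)$, which is precisely what $m\Cap v_i$ requires. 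This minimizer trick is the crux of the argument, and it is the multi-vertex strengthening of Lemma~\ref{lem:wedge-preserves-Cap}. Since $\Cap$ is reflexive and symmetric, it follows that $V\cup\{m\}$ is pairwise $\Cap$-related and hence a simplex of $CP(\omega)$; maximality of $V$ then gives $m\in V$. Combined with $m\peq v_i$ for all $i$, this exhibits $m$ as the least vertex of $V$.
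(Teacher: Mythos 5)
Your proposal is correct and is essentially the paper's argument: the normalized pointwise minimum you construct is exactly what the paper produces by iterating its binary operation $v\wedge w$ over the elements of $P(\omega)$ (resp.\ the vertices of a maximal simplex), and your ``minimizer trick'' establishing $m\Cap v_i$ is precisely the multi-vertex form of Lemma~\ref{lem:wedge-preserves-Cap}, which the paper applies inductively before invoking maximality of the simplex. Your additional touches (the explicit nonemptiness of $P(\omega)$ via a linear extension of $<_1$, and the rank-function description of the least element) are correct but are refinements rather than a different route.
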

\begin{proof}
In (i) we may apply the operation $\wedge$ iteratively to the elements of $P(\omega)$ to produce the least element. 
For (ii) we apply the same procedure to the vertices of a maximal simplex in which case Lemma~\ref{lem:wedge-preserves-Cap} ensures that the least element remains within the maximal simplex.
\end{proof}

\begin{remark}
The statement in (i) is in contrast to the situation for the original partial ordering on $P(\omega)$ where there is usually neither a least nor a greatest element. We shall eventually exhibit a sequence of elementary collapses that reduces the simplicial complex $CP(\omega)$ to its least element with respect to $\peq$. As to the statement in (ii) we remark that for a simplex which is not maximal there may not be a least vertex.
\end{remark}

The next lemma provides the basic combinatorial input on $CP(\omega)$.

\begin{lemma}\label{lem:u-v-tilde-v-existence}
Let $u$ and $v$ be elements in $P(\omega)$ and assume that $u\speq v$. Then there exists an element $\tilde v$ in $P(\omega)$ such that (i) $\tilde v\speq v$, (ii) the relation $v\Cap \tilde v$ holds, and (iii) for an element $w$ in $P(\omega)$ the relations $u\Cap w$ and $v\Cap w$ imply $\tilde v\Cap w$.
\end{lemma}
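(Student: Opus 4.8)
The plan is to pass to the functional notation, writing $u=(r,\gamma)$ and $v=(p,\alpha)$, and to define $\tilde v$ as the element of $P(\omega)$ obtained by normalizing the function
\[
x\longmapsto \max\{\gamma(x),\,\alpha(x)-1\},\qquad x\in S,
\]
that is, by factoring it through an order-preserving injection into $\mathbb N_{\geq 1}$ exactly as in the definition of $v\wedge w$. The guiding intuition is that we wish to move $v$ strictly downwards with respect to $\peq$ while remaining admissible for the orientation constraints of $\omega$. Subtracting $1$ from $\alpha$ achieves the downward move, but after normalization it merely returns $v$; taking the pointwise maximum with $\gamma$ forces a genuine merging of blocks that $\gamma$ —being itself an element of $P(\omega)$— certifies to be compatible with $\omega$. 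Note that wherever $\tilde\alpha(x)=\alpha(x)-1$ we must have $\gamma(x)<\alpha(x)$, hence $\alpha(x)\geq 2$, so all values lie in $\mathbb N_{\geq 1}$.

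Granting this construction, conditions (ii) and (iii) both fall out of the monotonicity of $\max$ in each variable. For (iii) I would argue contrapositively: if $\gamma(x)\geq\gamma(y)$ and $\alpha(x)\geq\alpha(y)$, then $\max\{\gamma(x),\alpha(x)-1\}\geq\max\{\gamma(y),\alpha(y)-1\}$, so $\tilde\alpha(x)<\tilde\alpha(y)$ forces $\gamma(x)<\gamma(y)$ or $\alpha(x)<\alpha(y)$; feeding these into the hypotheses $u\Cap w$ and $v\Cap w$ respectively yields $\tilde v\Cap w$, which is (iii). For (ii), if $\alpha(x)<\alpha(y)$ then $\alpha(x)-1<\alpha(y)-1\leq\tilde\alpha(y)$ and, crucially, $\gamma(x)\leq\alpha(x)\leq\alpha(y)-1\leq\tilde\alpha(y)$, using $\gamma\leq\alpha$ and integrality; thus both entries in the maximum defining $\tilde\alpha(x)$ are bounded by $\tilde\alpha(y)$, giving $\tilde\alpha(x)\leq\tilde\alpha(y)$, i.e.\ $v\Cap\tilde v$.

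It then remains to check that $\tilde v$ is a legitimate element of $P(\omega)$ and that (i) holds. For membership one verifies the orientation condition: if $\omega\{x,y\}=1$ and $\vec\omega\{x,y\}=(x,y)$, then $\gamma(x)<\gamma(y)$ and $\alpha(x)<\alpha(y)$ because $u,v\in P(\omega)$, and each strict inequality pushes the corresponding entry of $\tilde\alpha(x)$ strictly below $\tilde\alpha(y)$, whence $\tilde\alpha(x)<\tilde\alpha(y)$. For (i) I would split $\tilde v\speq v$ into $\tilde v\peq v$ and $\tilde v\neq v$. Since $u\speq v$ there is some $x_0$ with $\gamma(x_0)<\alpha(x_0)$, which gives $\tilde\alpha(x_0)=\alpha(x_0)-1<\alpha(x_0)$ and hence $\tilde v\neq v$. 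The relation $\tilde v\peq v$ is where normalization must be handled with care: before normalizing we have $\max\{\gamma(x),\alpha(x)-1\}\leq\alpha(x)$, and the normalization map sends a value $t$ in the image to its rank, which never exceeds $t$; composing these two facts yields that the normalized value at $x$ is $\leq\alpha(x)$, as required. All of the conditions (ii), (iii), membership in $P(\omega)$, and the strictness $\tilde\alpha(x_0)<\alpha(x_0)$ depend only on the relative order of the values of $\tilde\alpha$, so they are unaffected by the order-preserving normalization.

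I expect the only genuinely delicate points to be the verification that $\tilde v\in P(\omega)$ and the monotonicity bookkeeping ensuring that $\tilde v\peq v$ survives normalization; these are precisely the two places where the hypotheses $u\in P(\omega)$ and $u\speq v$ are used in an essential way, and they explain why the two naive candidates fail: $\tilde v=u$ can violate (ii) (the blocks of $u$ need not be sorted by the order of $v$), while the normalization of $\alpha-1$ simply reproduces $v$ and so violates the strictness in (i).
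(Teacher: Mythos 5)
Your construction is correct, but it is genuinely different from the one in the paper, so it is worth comparing the two. The paper stays in ordered-partition notation and performs a minimal local surgery: writing $u=\ang{R_i}_{i=1}^k$ and $v=\ang{S_i}_{i=1}^p$, it takes the first index $j$ with $S_j\nsubseteq R_j$, merges $(R_1\cup\dots\cup R_{j-1})\cap S_j$ into $S_{j-1}$, keeps $R_j\cap S_j$ in position $j$ (deleting it if empty), and leaves every other block unchanged; conditions (i)--(iii) are then verified by a case analysis on pairs of indices, the pair $(j-1,j)$ being the only nontrivial case. Your $\tilde v$ is instead given by the global pointwise formula $\tilde\alpha'=\max\{\gamma,\alpha-1\}$ followed by normalization, which decrements the value of $\alpha$ at \emph{every} point where $\gamma<\alpha$ and can therefore modify many blocks simultaneously. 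The two recipes really do produce different elements: for $v=(\{a\},\{b\},\{c,d\})$ and $u=(\{a,b\},\{c\},\{d\})$ yours returns $(\{a,b\},\{c\},\{d\})$ while the paper's returns $(\{a,b\},\{c,d\})$. Both satisfy (i)--(iii), and only the existence statement is used downstream (in Lemma~\ref{lem:free-in-good-subcomplex} the minimality in condition (iii) of a good subcomplex is imposed after the fact, so your $\tilde v$ is a drop-in replacement). What your approach buys is uniformity: membership in $P(\omega)$, (ii) and (iii) all reduce to one-line monotonicity statements about $\max$, exactly mirroring how the paper proves Lemma~\ref{lem:wedge-preserves-Cap} for the dual operation $\wedge$ defined via $\min$; the only delicate point is the one you isolate, namely that normalization by ranks never increases values, which is what preserves both $\tilde v\peq v$ and the strict inequality at the witness $x_0$. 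What the paper's approach buys is that its $\tilde v$ differs from $v$ in at most two blocks, making the combinatorial ``move'' as small and as geometrically transparent as possible, though this economy is never exploited. Your closing diagnosis of the two naive candidates is also accurate: $\tilde v=u$ can fail (ii), and normalizing $\alpha-1$ alone just reproduces $v$.
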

\begin{proof}
Let us write $u=\ang{R_i}_{i=1}^k$ and $v=\ang{S_i}_{i=1}^p$, and let $2\leq j\leq p$ be such that $S_i\subseteq R_i$ for $1\leq i<j$ and $S_j\nsubseteq R_j$. We define a tuple of disjoint subsets $\tilde S_i$ of $S$ by setting
\[
%(\tilde S_1,\dots,\tilde S_p)
\ang{\tilde S_i}_{i=1}^p=(S_1,\dots,S_{j-2},S_{j-1}\cup ((R_1\cup\dots\cup R_{j-1})\cap S_j),R_j\cap S_j,S_{j+1},\dots,S_p).
\]
Then we define
\[
\tilde v=
\begin{cases}
(\tilde S_1,\dots,\tilde S_p),& \text{if $R_j\cap S_j\neq \emptyset$,}\\
(\tilde S_1,\dots,\tilde S_{j-1},\tilde S_{j+1},\dots,\tilde S_p), & \text{if $R_j\cap S_j=\emptyset$.}
\end{cases}
\]
Let $\{x,y\}$ be elements in $S$ and suppose $\omega\{x,y\}=1$ and $\vec\omega\{x,y\}=(x,y)$. Then there are uniquely defined indices $i_1<i_2$ and $j_1<j_2$ such that $x\in R_{i_1}\cap S_{j_1}$ and  $y\in R_{i_2}\cap S_{j_2}$. We must show that $x\in \tilde S_{h_1}$ and $y\in \tilde S_{h_2}$ for some $h_1<h_2$. This is clear if $(j_1,j_2)\neq (j-1,j)$. In the remaining case $(j_1,j_2)=(j-1,j)$, we know that by assumption $S_{j-1}\subseteq R_{j-1}$ and $S_j\subseteq R_1\cup\dots\cup R_j$. Since $u$ is an element in $P(\omega)$, this implies $y\in R_j$ and hence $x\in \tilde S_{j-1}$ and $y\in \tilde S_j$. We conclude that $\tilde v$ indeed defines an element of $P(\omega)$. The relation in (i) is implied by the assumption on $j$ and the relation in (ii) is easily verified. Now suppose that $w$ is an element in $P(\omega)$ such that the relations $u\Cap w$ and $v\Cap w$ hold. Writing $w=\ang{T_i}_{i=1}^q$, we must show that $\tilde S_{i_1}\cap T_{j_2}=\emptyset$ or $\tilde S_{i_2}\cap T_{j_1}=\emptyset$ for each set of indices $i_1<i_2$ and $j_1<j_2$. This is again clear if $(i_1,i_2)\neq (j-1,j)$. In the case $(i_1,i_2)= (j-1,j)$, suppose that 
\[
\tilde S_{i_2}\cap T_{j_1}=R_j\cap S_j\cap T_{j_1}\neq \emptyset.
\]
Then $u\Cap w$ implies $R_h\cap T_{j_2}=\emptyset$ for $h<j$ and $v\Cap w$ implies $S_h\cap T_{j_2}=\emptyset$ for $h<j$. Taken together, this shows that 
\[
\tilde S_{i_1}\cap T_{j_2}=S_{j-1}\cap T_{j_2}\cup(R_1\cup\dots\cup R_{j-1})\cap S_j\cap T_{j_2}=\emptyset,
\]
which concludes the argument.
\end{proof}

The following auxiliary definition will be needed in our recursive approach to the main result. We say that a subcomplex $K$ of $CP(\omega)$ is a \emph{good subcomplex} if it is non-empty and satisfies the following conditions:
\begin{itemize}
\item
Every maximal simplex of $K$ has a least vertex with respect to $\peq$.
\item
Let $V$ be a simplex in $K$ and suppose there exists an element $\tilde v$ in $P(\omega)$ satisfying (i) $\tilde v\speq v$ for all $v$ in $V$, (ii) the relation $\tilde v\Cap v$ holds for all $v$ in $V$, and (iii) $\tilde v$ is minimal among elements in $P(\omega)$ satisfying (i) and (ii). Then $V\cup\{\tilde v\}$ is also a simplex in $K$. 
\end{itemize}
Given a good subcomplex $K$ of $CP(\omega)$, let $\Max(K)$ denote the set of maximal simplexes in $K$. We introduce a preorder on $\Max(K)$ by writing $V\peq W$ if the least vertices $v_0$ of $V$ and $w_0$ of $W$ are related by $v_0\peq w_0$. (This preorder is not antisymmetric since two maximal simplices may share a common least vertex). By definition, $V$ is maximal in this preorder if $V\peq W$ implies $W\peq V$. This means that if $v_0$ and $w_0$ again denote the least vertices, then $v_0\peq w_0$ implies $v_0=w_0$.

\begin{lemma}\label{lem:free-in-good-subcomplex}
Let $K$ be a good subcomplex of $CP(\omega)$ and let $V$ be a maximal simplex in $K$ with least vertex $v_0$. If $V$ is maximal in the preorder $\peq$ on $\Max(K)$ and $V-\{v_0\}$ is non-empty, then the latter is a free face of $V$ and the subcomplex obtained from $K$ by removing $V$ and $V-\{v_0\}$ is again a good subcomplex.
\end{lemma}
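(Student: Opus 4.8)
The plan is to treat the two assertions separately, first that $F:=V-\{v_0\}$ is a free face and then that $K':=K\setminus\{V,F\}$ is again good. For the free face claim I would reformulate it: in a finite complex any simplex containing $F$ lies in a maximal simplex containing $F$, so it suffices to prove that $V$ is the \emph{only} maximal simplex of $K$ containing $F$; since $F\subsetneq V$, this exhibits $F$ as a free face. I would also record the following consequence of the hypotheses, call it $(\ast)$: because $V$ is maximal in $K$ and $K$ is good, there is no element $\tilde v\speq v_0$ with $\tilde v\Cap v$ for all $v\in V$. Indeed, $\tilde v\speq v_0\peq v$ gives $\tilde v\speq v$ for every $v\in V$, so any such $\tilde v$ meets condition (i), and passing to a $\peq$-minimal element still satisfying (i) and (ii), the second defining property of a good subcomplex would force $V\cup\{\tilde v\}\in K$, contradicting maximality of $V$.

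Next, suppose toward a contradiction that $W\neq V$ is a maximal simplex of $K$ with $F\subseteq W$, and let $w_0$ be its least vertex. Since $F\cup\{v_0\}=V$ and both simplices are maximal, $v_0\notin W$. If $v_0\peq w_0$, then $V\peq W$ in the preorder on $\Max(K)$, so maximality of $V$ gives $w_0\peq v_0$, whence $v_0=w_0\in W$, a contradiction. Otherwise $s:=v_0\wedge w_0$ satisfies $s\speq v_0$ strictly, and I would apply Lemma~\ref{lem:u-v-tilde-v-existence} with $u=s$ and $v=v_0$ to obtain $\tilde v\speq v_0$ with $v_0\Cap\tilde v$ and the propagation property that $s\Cap z$ and $v_0\Cap z$ imply $\tilde v\Cap z$. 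The key verification is that $\tilde v\Cap v$ for every $v\in V$: for $v\in F\subseteq W$ one has $v\Cap w_0$ (both lie in the simplex $W$) and $v\Cap v_0$ (both in $V$), so Lemma~\ref{lem:wedge-preserves-Cap} yields $v\Cap s$, and the propagation property then gives $\tilde v\Cap v$; the case $v=v_0$ is the property $v_0\Cap\tilde v$. This $\tilde v$ contradicts $(\ast)$, so no such $W$ exists. I expect this construction to be the main obstacle: the difficulty is that one does not know a priori that $v_0\Cap w_0$, and bridging this gap by passing through $s=v_0\wedge w_0$ and invoking Lemma~\ref{lem:u-v-tilde-v-existence} is the crux.

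Finally, for goodness of $K'=K\setminus\{V,F\}$ I would argue as follows. It is non-empty since $F\neq\emptyset$ forces $|V|\geq 2$, so proper faces of $V$ survive. A simplex newly maximal in $K'$ is either an old maximal simplex of $K$ other than $V$, which has a least vertex because $K$ is good, or a facet $V-\{v\}$ with $v\in F$; the latter contains $v_0$, which is then its least vertex. For the second defining property, let $V'\in K'$ admit a $\peq$-minimal $\tilde v$ satisfying conditions (i) and (ii); by goodness of $K$ we have $V'\cup\{\tilde v\}\in K$, and it remains to rule out that this equals $V$ or $F$. If $V'\cup\{\tilde v\}=V$, then $\tilde v\speq v$ for all $v\in V-\{\tilde v\}$ forces $\tilde v=v_0$ (otherwise $\tilde v\speq v_0\peq\tilde v$), hence $V'=F\notin K'$, a contradiction. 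If $V'\cup\{\tilde v\}=F$, then $\tilde v\in F$, while $v_0$ (which lies strictly below every vertex of $V'\subseteq F$ and is $\Cap$-compatible with all of $V'$, being in $V$) satisfies (i) and (ii) and obeys $v_0\speq\tilde v$, contradicting minimality of $\tilde v$. Hence $V'\cup\{\tilde v\}\in K'$, which completes the verification that $K'$ is good.
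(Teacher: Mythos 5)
Your proof is correct and follows essentially the same route as the paper's: you rule out a second maximal simplex $W\supseteq V-\{v_0\}$ by forming $v_0\wedge w_0$, showing it lies strictly below $v_0$ via the preorder maximality of $V$, and then combining Lemma~\ref{lem:wedge-preserves-Cap} with Lemma~\ref{lem:u-v-tilde-v-existence} to produce a $\tilde v$ that contradicts maximality of $V$ through the goodness property; your verification that $K'$ remains good (least vertices of new maximal simplices, and excluding $U\cup\{\tilde u\}=V$ or $V-\{v_0\}$) matches the paper's argument step for step.
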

\begin{proof}
Suppose to the contrary that $W\neq V$ is a maximal simplex in $K$ and that $V-\{v_0\}\subseteq W$. Let $w_0$ be the least vertex of $W$ and let $u=v_0\wedge w_0$. Then $u\peq v_0$ and we claim that in fact $u\speq v_0$. Indeed, if $u=v_0$, then $v_0\peq w_0$, but the maximality of $V$ excludes the possibility $v_0=w_0$ and the maximality of $v_0$ excludes the possibility $v_0\speq w_0$. Notice also that $u\Cap v$ for all $v$ in $V-\{v_0\}$ by Lemma~\ref{lem:wedge-preserves-Cap}. Hence Lemma~\ref{lem:u-v-tilde-v-existence} shows the existence of an element $\tilde v_0\speq v_0$ in $P(\omega)$ satisfying the conditions (i) and (ii) in the definition of a good subcomplex (with respect to $V$). If necessary, we may implicitly replace $\tilde v_0$ by an element satisfying also the minimality condition (iii). By the definition of a good subcomplex it follows that $V\cup\{\tilde v_0\}$ is a simplex in $K$, contradicting the maximality of $V$. We conclude that there cannot be a maximal simplex $W\neq V$ containing $V-\{v_0\}$ and that consequently $V-\{v_0\}$ is a free face of $V$.

Let us write $K'$ for the subcomplex of $K$ obtained by removing $V$ and $V-\{v_0\}$. First we must show that every maximal simplex $U$ of $K'$ has a least vertex with respect to $\peq$. This is clear if $U$ is also maximal in $K$. If $U$ is not maximal in $K$, then $U\subseteq V$ and $U$ must be of the form $V-\{v\}$ for some vertex $v\neq v_0$ in $V$. Thus, in this case $v_0$ is a least vertex of $U$. Next let $U$ denote a general simplex of $K'$ and suppose that there exists an element $\tilde u$ in $P(\omega)$ such that (i) $\tilde u\speq u$ for all $u$ in $U$, (ii) the relation $\tilde u\Cap u$ holds for all $u$ in $U$, and (iii) $\tilde u$ is minimal with these properties. Then we know that $U\cup\{\tilde u\}$ is a simplex in $K$ and we must exclude the possibility that $U\cup\{\tilde u\}$ equals one of the simplices $V$ or $V-\{v_0\}$. If  $U\cup\{\tilde u\}=V$, then $\tilde u$ being the least vertex implies $\tilde u= v_0$ and hence $U=V-\{v_0\}$ which is a contradiction. If $U\cup\{\tilde u\}=V-\{v_0\}$, then $\tilde u\in V$ so that $v_0\speq \tilde u$ which contradicts the minimality condition of $\tilde u$ in (iii). We conclude that $K'$ is again a good subcomplex of $CP(\omega)$.
\end{proof}

\begin{proof}[Proof of Theorem~\ref{thm:homotopy-final-initial} for $n=2$]
Let $u_0$ be the least element of $P(\omega)$ and consider in general a good subcomplex $K$ of $CP(\omega)$. Given any vertex $v$ of $K$, we may apply Lemma~\ref{lem:u-v-tilde-v-existence} repeatedly to obtain an edge path in $K$ from $v$ to $u_0$. It follows in particular that $u_0$ is a vertex of $K$ and that if $\{v\}$ is a maximal simplex in $K$, then $v=u_0$ and $K=\{u_0\}$. We shall prove that any good subcomplex can be reduced to $\{u_0\}$ by a sequence of elementary collapses. Since $CP(\omega)$ is itself a good subcomplex, this will finish the proof of the theorem. The argument is by induction on the number of simplices in $K$. If there is only one simplex, then $K=\{u_0\}$ as observed above. If $K$ has more than one simplex, then it has a maximal simplex $V$ with minimal vertex $v_0$ such that $V-\{v_0\}$ is non-empty and $V$ is maximal in the preorder $\peq$ on $\Max(K)$. Then it follows from Lemma~\ref{lem:free-in-good-subcomplex} that $V-\{v_0\}$ is a free face of $V$ and that the subcomplex obtained by removing $V$ and $V-\{v_0\}$ is again a good subcomplex. This concludes the inductive step.
\end{proof}

\section{The relation to little $n$-cubes} \label{sec:relation-little-n-cubes}
We begin by fixing notation for the little $n$-cubes operad \cite{Boardman-Vogt,May72}. Let $I$ denote the unit interval. A positive  affine embedding $f\colon I\to I$ is a function of the form $f(t)=(1-t)a+tb$ for $0\leq a<b\leq 1$. A little $n$-cube is 
an $n$\nobreakdash-tuple $c=\ang{f_i}_{i=1}^n$ of positive affine embeddings which we identify with the corresponding embedding of $I^n$ into itself. We write $\scC_n(1)$ for the topological space of little $n$-cubes, topologized as the subspace of $I^{2n}$ defined by the endpoints of the affine embeddings. Now let $S$ be a finite set and let $\scC_n(S)$ be the space of $S$-indexed little $n$-cubes with separated interiors,
\[
\scC_n(S)=\{\ang{c_x}_{x\in S}\in \scC_n(1)^{\times S}: c_x(\mathrm{int} I^n)\cap c_y(\mathrm{int} I^n)=\emptyset \text{ for $x\neq y$}\}.
\]
We usually write $\scC_n(k)$ instead of $\scC_n(\{1,\dots,k\})$. 
In order to define the operad structure it is convenient to interpret an element $\ang{c_x}$ of $\scC_n(S)$ as a certain type of function $\ang{c_x}\colon (I^n)^{\sqcup S}\to I^n$, where the domain is the $S$-indexed disjoint union of copies of $I^n$. With this notation, the operad product 
\[
\gamma\colon \scC_n(k)\times \scC_n(S_1)\times \dots\times \scC_n(S_k) \to \scC_n(S_1\sqcup\dots \sqcup S_k)
\]
takes a tuple of elements $\ang{c_x}\in\scC_n(k)$ and $\ang{d^i_x}\in\scC_n(S_i)$ for $i=1,\dots,k$, to the element $\gamma(\ang{c_x},\ang{d^1_x},\dots,\ang{d^k_x})$ defined by the composition
\[
(I^n)^{\sqcup(S_1\sqcup\dots\sqcup S_k)}\cong(I^n)^{\sqcup S_1} \sqcup \dots\sqcup (I^n)^{\sqcup S_k}
\xr{\ang{d^1_x}\sqcup\dots\sqcup \ang{d^k_x}}(I^n)^{\sqcup k}\xr{\ang{c_x}} I^n.
\]
Restricted to the categories $\scC_n(k)$ for $k\geq 0$, this gives the little $n$-cubes operad as defined in \cite{May72}. The operad unit is the identity $\id_{I^n}$ in $\scC_n(1)$ and $\Sigma_k$ acts from the right on $\scC_n(k)$ by permuting the little $n$-cubes. 

Following \cite{Berger97} and \cite{BFSV}, we shall consider a certain family of contractible subspaces of $\scC_n(S)$. Given little $n$-cubes $c=\ang{f_i}_{i=1}^n$ and $d=\ang{g_i}_{i=1}^n$, we write $c<_id$ to mean that $f_i(1)\leq g_i(0)$. Notice that $c(\mathrm{int}I^n)\cap d(\mathrm{int}I^n)=\emptyset$ if and only if $c<_id$ or $d<_ic$ for some $1\leq i\leq n$. For each object $\mu$ in $\scG_n(S)$, we define a subspace $G(\mu)$ of $\scC_n(S)$ by
\[
G(\mu)=\textstyle\bigcap_{\{x,y\}}\{\ang{c_z}_{z\in S}\in \scC_n(1)^{\times S}\colon c_x<_ic_y \text{ if $\mu\{x,y\}=i$ and $\vec\mu\{x,y\}=(x,y)$}\},
\] 
where the intersection is over all pairs $\{x,y\}$ of elements in $S$. The following lemma is easily verified.

\begin{lemma}
The space $G(\mu)$ is a closed subspace of $\scC_n(S)$ and a convex subspace of $(I^{2n})^{\times S}$.\qed
\end{lemma}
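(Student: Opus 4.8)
The plan is to work entirely in coordinates. I would identify $\scC_n(1)$ with the subset of $I^{2n}$ consisting of tuples $(a_1,b_1,\dots,a_n,b_n)$ with $a_i<b_i$ for all $i$, where $a_i=f_i(0)$ and $b_i=f_i(1)$ record the endpoints of the affine embeddings. A point of $(I^{2n})^{\times S}$ is then a tuple of endpoint data $(a_{z,i},b_{z,i})$ for $z\in S$ and $1\leq i\leq n$, and $\scC_n(1)^{\times S}$ is the subset where $a_{z,i}<b_{z,i}$ for all $z$ and $i$. Under this identification the relation $c_x<_ic_y$ becomes precisely the inequality $b_{x,i}\leq a_{y,i}$.

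First I would translate the defining conditions of $G(\mu)$ into this language. Being a tuple of little $n$-cubes contributes the strict inequalities $a_{z,i}<b_{z,i}$, while each edge $\{x,y\}$ with label $\mu\{x,y\}=i$ and orientation $\vec\mu\{x,y\}=(x,y)$ contributes the single non-strict inequality $b_{x,i}\leq a_{y,i}$. Every one of these conditions is affine-linear in the coordinates of $(I^{2n})^{\times S}$. Convexity then follows immediately: $G(\mu)$ is the intersection of the convex cube $(I^{2n})^{\times S}$ with finitely many half-spaces, and an intersection of convex sets is convex.

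For closedness in $\scC_n(S)$ I would proceed in two steps. First observe that $G(\mu)\subseteq \scC_n(S)$: for each pair $\{x,y\}$ the defining condition forces $c_x<_ic_y$ (or $c_y<_ic_x$) with $i=\mu\{x,y\}$, and, as recorded just before the lemma, this implies $c_x(\mathrm{int}\,I^n)\cap c_y(\mathrm{int}\,I^n)=\emptyset$; hence every element of $G(\mu)$ has separated interiors. Second, the edge inequalities $b_{x,i}\leq a_{y,i}$ are non-strict and therefore cut out a closed subset of $(I^{2n})^{\times S}$; intersecting with $\scC_n(1)^{\times S}$ exhibits $G(\mu)$ as closed in $\scC_n(1)^{\times S}$, and since $G(\mu)\subseteq \scC_n(S)\subseteq \scC_n(1)^{\times S}$, it follows that $G(\mu)=G(\mu)\cap \scC_n(S)$ is closed in $\scC_n(S)$.

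There is no substantive obstacle here. The only point requiring genuine care is the bookkeeping of the two different ambient spaces---$(I^{2n})^{\times S}$ for the convexity assertion and $\scC_n(S)$ for the closedness assertion---together with correctly rendering $c_x<_ic_y$ as the linear inequality $b_{x,i}\leq a_{y,i}$, and tracking which defining inequalities are strict (the little-cube conditions, irrelevant to closedness in $\scC_n(S)$) and which are non-strict (the edge conditions, which supply both the closedness and, via separated interiors, the containment $G(\mu)\subseteq\scC_n(S)$).
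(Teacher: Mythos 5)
Your proof is correct and is exactly the routine coordinate verification that the paper leaves to the reader (the lemma is stated with a \qed and the remark that it is ``easily verified,'' with no written proof). Translating $c_x<_ic_y$ into the affine non-strict inequality $b_{x,i}\leq a_{y,i}$, observing that these closed half-space conditions give both convexity and closedness, and using the separated-interiors criterion to check $G(\mu)\subseteq\scC_n(S)$ is precisely the intended argument.
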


\begin{lemma}\label{lem:G(mu)-non-empty}
For an object $\mu$ in $\scG_n(S)$, we have $G(\mu)\neq \emptyset$ if and only if $\mu$ is in the subcategory $\scK_n^e(S)$.
\end{lemma}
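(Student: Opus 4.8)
The plan is to translate the defining condition of $\scK_n^e(S)$ into an acyclicity condition that can be read off one coordinate at a time, and then to build an explicit little-cube configuration. For each label $i\in\{1,\dots,n\}$, let $D_i$ be the directed graph on the vertex set $S$ whose arcs $x\to y$ are exactly the pairs with $\mu\{x,y\}=i$ and $\vec\mu\{x,y\}=(x,y)$. Unwinding the definitions, a monochromatic oriented cycle in $\mu$ (a sequence $x_1,\dots,x_k$ of distinct elements whose successive labels are all equal) is precisely a directed cycle in some $D_i$; hence $\mu$ lies in $\scK_n^e(S)$ if and only if every $D_i$ is acyclic. The point of this reformulation is that the conditions cutting out $G(\mu)$ decouple across coordinates: the $i$-th coordinate of a family $\ang{c_x}$, with $c_x=\ang{f^x_i}_{i=1}^n$, is constrained only by the arcs of $D_i$, via $f^x_i(1)\le f^y_i(0)$ whenever $x\to y$ in $D_i$.

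For the implication $G(\mu)\neq\emptyset\Rightarrow \mu\in\scK_n^e(S)$ I would argue by contraposition. If $\mu\notin\scK_n^e(S)$, fix a monochromatic oriented cycle $x_1,\dots,x_k$ with common label $i$, so that $x_1\to x_2\to\dots\to x_k\to x_1$ is a directed cycle in $D_i$. Any family $\ang{c_z}$ in $G(\mu)$ would satisfy $c_{x_j}<_i c_{x_{j+1}}$ for each consecutive pair (with $x_{k+1}:=x_1$), and since each $f^x_i$ is a positive affine embedding we have $f^x_i(0)<f^x_i(1)$. Chaining these inequalities around the cycle yields $f^{x_1}_i(0)<f^{x_1}_i(0)$, a contradiction; hence $G(\mu)=\emptyset$.

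For the converse, suppose every $D_i$ is acyclic and construct an explicit point of $G(\mu)$. Since $D_i$ is acyclic, its reachability relation is a (strict) partial order on $S$, which I extend to a linear order $\prec_i$ by a topological sort. I then place the $i$-th coordinate intervals as follows: partition $I$ into $\num{S}$ consecutive subintervals of equal length and assign to the $r$-th element of $S$ in the order $\prec_i$ the $r$-th subinterval. This defines a positive affine embedding $f^x_i$ for every $x$ and $i$, hence a family $\ang{c_x}$ in $\scC_n(1)^{\times S}$. By construction, $x\to y$ in $D_i$ forces $x\prec_i y$ and therefore $f^x_i(1)\le f^y_i(0)$, so all the defining inequalities of $G(\mu)$ hold. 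Moreover, for any pair $x\neq y$ the single coordinate $i=\mu\{x,y\}$ already separates $c_x$ from $c_y$, so their interiors are disjoint and $\ang{c_x}$ genuinely lies in $\scC_n(S)$; thus $\ang{c_x}\in G(\mu)$.

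I expect the only real subtlety to be the bookkeeping in the reformulation step, namely verifying that monochromatic oriented cycles in $\mu$ correspond exactly to directed cycles in the graphs $D_i$ and that the defining constraints of $G(\mu)$ split cleanly coordinate by coordinate; once this is in place both directions are short, with the forward direction a single chain of inequalities and the converse a direct construction. The degenerate cases $S=\emptyset$ and $\num{S}=1$ are immediate, since there are no pairs to constrain and $\mu$ is automatically in $\scK_n^e(S)$.
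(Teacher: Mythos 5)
Your proof is correct and follows essentially the same route as the paper: the forward direction by chaining the $<_i$ inequalities around a monochromatic cycle (the paper phrases this as $<_i$ being a strict partial order on $\scC_n(1)$), and the converse by extending the acyclic relations to linear orders and placing cubes in $\num{S}$ consecutive subintervals coordinate by coordinate. Your extra checks (the decoupling across coordinates and the disjointness of interiors) are details the paper leaves implicit, but the argument is the same.
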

\begin{proof}
If $\mu$ is not in $\scK_n^e(S)$, then there exists an oriented cycle $x_1,\dots,x_k$ in $\mu$ such that the edges have constant label $i$, say. Since the relation $<_i$ is a strict partial order on $\scC_n(1)$, this shows that $G(\mu)=\emptyset$. Next suppose that $S$ is equipped with linear orderings $<_i$ for $i=1,\dots, n$. Then we claim that there exists an element $\ang{c_x}_{x\in S}$ in $\scC_n(1)^{\times S}$ such that $x<_iy$ implies $c_x<_ic_y$ for all $i$. In order to construct such an element one may divide $I$ in a number of subintervals equal to the cardinality of $S$ and then use the orderings to specify appropriate subcubes of $I^n$. Now let $\mu$ be an object in $\scK_n^e(S)$ and consider the binary relations $<_i$ on $S$ defined by $\mu\{x,y\}=i$ and $\mu\{x,y\}=(x,y)$. Since these relations are acyclic by assumption, they extend to linear orderings of $S$. Hence the above argument shows that $G(\mu)\neq \emptyset$.
\end{proof}

Let $V$ be a set of objects from $\scK_n^e(S)$ and let $U\subseteq V$ be a subset. Then there are closed inclusions
\begin{equation}\label{eq:G(mu)-inclusions}
\textstyle\bigcup_{\mu\in U}G(\mu)\subseteq \bigcup_{\mu\in V}G(\mu)\subseteq \scC_n(S).
\end{equation}
Notice that $\scC_n(S)$ is itself the union of the subspaces $G(\mu)$ as $\mu$ ranges over all the objects in $\scK_n^e(S)$. In the following we shall use the term \emph{cofibration} in the usual way for a map having the homotopy extension property with respect to all spaces. The next lemma can be deduced from the proof of \cite[Lemma~4.10]{BFV07}.

\begin{lemma}\label{lem:G-union-cofibration}
The inclusions in \eqref{eq:G(mu)-inclusions} are cofibrations. \qed
\end{lemma}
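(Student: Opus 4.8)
The plan is to pass to the evident Euclidean picture, where each $G(\mu)$ becomes a compact convex polytope, and then to invoke the fact that the inclusion of a subcomplex of a finite simplicial complex is always a cofibration. First I would reduce the two inclusions in \eqref{eq:G(mu)-inclusions} to a single case. Since $\scC_n(S)=\bigcup_{\mu\in\scK_n^e(S)}G(\mu)$ (every element of $\scC_n(S)$ has separated interiors and hence lies in some $G(\mu)$, by the argument of Lemma~\ref{lem:G(mu)-non-empty}), the second inclusion is the special case $V=\scK_n^e(S)$ of the first. Moreover $\scK_n^e(S)$ is a finite set, so all the unions in sight are finite. It therefore suffices to treat $\bigcup_{\mu\in U}G(\mu)\subseteq\bigcup_{\mu\in V}G(\mu)$ for finite $U\subseteq V\subseteq\scK_n^e(S)$.

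Next I would record the geometry. Writing each little $n$-cube $\ang{f_i}_{i=1}^n$ by its $2n$ endpoints $f_i(0)\le f_i(1)$, we identify $\scC_n(1)^{\times S}$ with a subset of $\mathbb{R}^{2n\num{S}}$, and the defining conditions $c_x<_ic_y$, i.e.\ $f_i^x(1)\le f_i^y(0)$, together with the interval constraints, are affine inequalities. Hence each $G(\mu)$ is cut out of the cube $(I^{2n})^{\times S}$ by finitely many closed half-spaces and is a compact convex polytope (consistent with the preceding lemma). The problem thus becomes the purely geometric statement that for a finite family of compact convex polytopes in $\mathbb{R}^N$ and a subfamily indexed by $U\subseteq V$, the inclusion $\bigcup_{U}G(\mu)\subseteq\bigcup_{V}G(\mu)$ is a cofibration.

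To prove this I would produce a common triangulation. The finitely many affine hyperplanes bounding the polytopes $G(\mu)$ for $\mu\in V$ determine a finite hyperplane arrangement, and hence a polyhedral subdivision of $\bigcup_{V}G(\mu)$ in which each $G(\mu)$ is a union of closed cells; subdividing this regular cell complex into simplices without introducing vertices in the interior of any cell (e.g.\ by a pulling triangulation) yields a finite simplicial complex $K$ with $\num{K}=\bigcup_{V}G(\mu)$ such that each $G(\mu)$, and therefore each of $\bigcup_{U}G(\mu)$ and $\bigcup_{V}G(\mu)$, is the realization of a subcomplex. Since the inclusion of a subcomplex is a cofibration, the claim follows. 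The main obstacle is precisely the construction of this simultaneous triangulation compatible with every $G(\mu)$: this is a standard fact in PL topology, but it is the one point requiring genuine input. An alternative that avoids triangulation is to adjoin one polytope at a time, using that a finite union of closed convex sets is closed, so that $\bigcup_{V}G(\mu)=\big(\bigcup_{V\setminus\{\nu\}}G(\mu)\big)\cup_{G(\nu)\cap\bigcup_{V\setminus\{\nu\}}G(\mu)}G(\nu)$ is a pushout along a closed inclusion; cobase change then reduces the cofibration property to the analogous statement for the union of the convex sets $G(\nu)\cap G(\mu)$ inside the convex body $G(\nu)$. One still needs an NDR structure on the latter pair, so this route ultimately relies on the same convex-geometric input, and I would prefer the triangulation argument.
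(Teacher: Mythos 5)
Your reduction of the second inclusion to the first is fine, but the geometric core of your argument rests on a false claim: $G(\mu)$ is \emph{not} a compact convex polytope. A little $n$-cube is a tuple of \emph{positive} affine embeddings $f_i(t)=(1-t)a_i+tb_i$ with $a_i<b_i$ \emph{strictly}, so $\scC_n(1)$ is the subset of $I^{2n}$ cut out by $0\le a_i<b_i\le 1$; it is neither closed in $I^{2n}$ nor compact, and the same goes for $G(\mu)$. This is exactly why the paper's preceding lemma asserts only that $G(\mu)$ is closed \emph{in $\scC_n(S)$} and convex in $(I^{2n})^{\times S}$ --- your parenthetical ``consistent with the preceding lemma'' conflates relative closedness with closedness in the cube. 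Already for $\num{S}=1$ one has $G(\mu)=\scC_n(1)$, a ``partially open'' polytope which is not the realization of any finite simplicial complex. Consequently the hyperplane-arrangement/pulling-triangulation step does not apply as stated: no finite complex $K$ with $\num{K}=\bigcup_{\mu\in V}G(\mu)$ exists. Your fallback pushout route suffers from the same defect: the $G(\mu)$ are not closed in $\mathbb{R}^N$ (they are closed in $\scC_n(S)$, which is what actually justifies the pushout square), and the NDR structure you would need on the pair consisting of $G(\nu)$ and $G(\nu)\cap\bigcup_{\mu\in V\setminus\{\nu\}}G(\mu)$ is precisely the missing input, so by your own admission that route is not complete either.

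The argument can be repaired, but the repair is where the real content lies. Write $P(\mu)$ for the genuine compact polytope obtained by relaxing to the weak inequalities $a_i^x\le b_i^x$ (allowing degenerate cubes) while keeping the separation conditions $f_i^x(1)\le f_i^y(0)$; then $G(\mu)=P(\mu)\cap C$, where $C=\scC_n(1)^{\times S}$ is \emph{open} in $(I^{2n})^{\times S}$, and hence $\bigcup_{\mu\in V}G(\mu)=\bigl(\bigcup_{\mu\in V}P(\mu)\bigr)\cap C$. Now either (a) run your triangulation argument on the compact polyhedral pair given by $\bigcup_{\mu\in V}P(\mu)$ and $\bigcup_{\mu\in U}P(\mu)$ and then intersect with $C$, using that an open subset of a polyhedron is again a polyhedron (with infinitely many simplices) and that the resulting closed polyhedral subpair can be triangulated compatibly, so the inclusion is still that of a subcomplex; or (b) avoid triangulations altogether: a finite union of compact convex subsets of $\mathbb{R}^N$ is an ANR, an open subset of an ANR is an ANR, so both unions of the $G(\mu)$ are ANRs, the smaller is closed in the larger, and a closed inclusion of ANRs is a cofibration. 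Either way, the essential input is not the simultaneous triangulation of compact polytopes (which, as you say, is standard) but the handling of the non-compactness forced by the non-degeneracy of little cubes. Note also that the paper itself gives no argument here: it deduces the lemma from the proof of Lemma~4.10 of the cited paper of Brun--Fiedorowicz--Vogt, so your attempt at a self-contained proof is a reasonable undertaking, but as written it has this genuine gap.
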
 

\begin{lemma}\label{lem:Kne-infimum}
Let $\mu_1$ and $\mu_2$ be objects of $\scK_n^e(S)$ and suppose that $G(\mu_1)\cap G(\mu_2)$ is not empty. Then the infimum $\mu_0$ of $\mu_1$ and $\mu_2$ exists in $\scK_n^e(S)$ and there is an inclusion $G(\mu_1)\cap G(\mu_2)\subseteq G(\mu_0)$.
\end{lemma}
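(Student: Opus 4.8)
The plan is to compute the infimum $\mu_0$ explicitly, edge by edge, and then read off both the asserted inclusion of $G$-spaces and the membership $\mu_0\in\scK_n^e(S)$ from this description. The starting observation is that $\scG_n(S)$ is literally the product, over the edges $\{x,y\}$ of the complete graph on $S$, of the ``edge poset'' whose elements are the pairs (label, orientation) and in which $(i,\epsilon)\leq(j,\delta)$ precisely when $i<j$, or $i=j$ and $\epsilon=\delta$. Infima in a product of posets are computed coordinatewise, so $\mu_1\wedge\mu_2$ will exist in $\scG_n(S)$ as soon as the infimum of the two edge data exists on each edge. On the edge poset the infimum of $(i_1,\epsilon_1)$ and $(i_2,\epsilon_2)$ fails to exist exactly when $i_1=i_2$ but $\epsilon_1\neq\epsilon_2$ (then one has two incomparable maximal lower bounds, or none when the common label is $1$); in every other case it is the pair with label $\min(i_1,i_2)$ carrying the orientation of the unique, or common, minimizer.

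The key step will be to rule out this bad case using the hypothesis that $G(\mu_1)\cap G(\mu_2)$ is non-empty. If some edge $\{x,y\}$ had $\mu_1\{x,y\}=\mu_2\{x,y\}=i$ with opposite orientations, then on any common point $G(\mu_1)$ would force $c_x<_ic_y$ while $G(\mu_2)$ would force $c_y<_ic_x$; since $<_i$ is a strict partial order on $\scC_n(1)$ this is impossible, so the intersection would be empty, contrary to assumption. Hence no edge lies in the bad case, the coordinatewise infimum $\mu_0=\mu_1\wedge\mu_2$ exists in $\scG_n(S)$, and on each edge $\mu_0$ carries exactly the label/orientation data of whichever of $\mu_1,\mu_2$ attains the smaller label.

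It then remains to establish the inclusion and the membership in $\scK_n^e(S)$. For the inclusion I would note that, by the explicit description just obtained, the defining constraint of $G(\mu_0)$ on each edge coincides with the constraint coming from whichever of $\mu_1,\mu_2$ supplied the data of $\mu_0$ on that edge; consequently any point of $G(\mu_1)\cap G(\mu_2)$ satisfies every defining constraint of $G(\mu_0)$, which gives $G(\mu_1)\cap G(\mu_2)\subseteq G(\mu_0)$. The membership $\mu_0\in\scK_n^e(S)$ then comes for free: since $G(\mu_0)\supseteq G(\mu_1)\cap G(\mu_2)\neq\emptyset$, Lemma~\ref{lem:G(mu)-non-empty} immediately yields $\mu_0\in\scK_n^e(S)$, bypassing any hand check of acyclicity. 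Finally, because $\scK_n^e(S)$ is a full subposet of $\scG_n(S)$ and $\mu_0$ dominates every lower bound of $\mu_1,\mu_2$ in $\scG_n(S)$, it is in particular the infimum within $\scK_n^e(S)$.

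As for the main obstacle, I do not expect any step to be genuinely hard; the only part requiring real care is the bookkeeping of the edge poset and the verification that its infima behave as claimed. The one point I would want to emphasize is the use of Lemma~\ref{lem:G(mu)-non-empty} to deduce $\mu_0\in\scK_n^e(S)$ from non-emptiness of $G(\mu_0)$, since this is what lets us sidestep a direct combinatorial argument that $\mu_0$ contains no monochromatic oriented cycle.
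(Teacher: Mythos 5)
Your proposal is correct and is essentially the paper's own proof: the paper likewise defines $\mu_0$ edgewise by $\mu_0\{x,y\}=\min\{\mu_1\{x,y\},\mu_2\{x,y\}\}$ with the orientation of the minimizing factor, uses non-emptiness of $G(\mu_1)\cap G(\mu_2)$ to rule out an edge with equal labels but opposite orientations, and deduces $\mu_0\in\scK_n^e(S)$ from the inclusion $G(\mu_1)\cap G(\mu_2)\subseteq G(\mu_0)$ together with Lemma~\ref{lem:G(mu)-non-empty}. Your explicit product-of-edge-posets bookkeeping merely spells out the infimum verification that the paper leaves implicit.
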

\begin{proof}
Given $\mu_1$ and $\mu_2$, we define $\mu_0$ by
$
\mu_0\{x,y\}=\min\{\mu_1\{x,y\},\mu_2\{x,y\}\}
$
and
\[
\vec\mu_0\{x,y\}=
\begin{cases}
\vec\mu_1\{x,y\},&\text{if $\mu_0\{x,y\}=\mu_1\{x,y\}$},\\
\vec\mu_2\{x,y\},&\text{if $\mu_0\{x,y\}=\mu_2\{x,y\}$}.
\end{cases}
\]
This is well-defined: If $\mu_1\{x,y\}=\mu_2\{x,y\}$, then the condition that $G(\mu_1)\cap G(\mu_2)$ be non-empty implies that 
$\vec\mu_1\{x,y\}=\vec\mu_2\{x,y\}$. Furthermore, since $G(\mu_1)\cap G(\mu_2)$ is contained in $G(\mu_0)$, Lemma~\ref{lem:G(mu)-non-empty} implies that $\mu_0$ is indeed an object in $\scK_n^e(S)$.
\end{proof}

Consider in general a finite partially ordered set $\cA$ and recall that an $\cA$-diagram $F\colon\cA\to\Top$ is said to be \emph{Reedy cofibrant} if for each object $a$ in $\cA$, the canonical map
$
\colim_{\partial(\cA\sdarrow a)}F\to F(a) 
$
is a cofibration. Here $\partial(\cA\sdarrow a)$ denotes the subcategory of the overcategory $(\cA\sdarrow a)$ obtained by excluding the terminal object $\id_a$. The property of a Reedy cofibrant diagram $F$ that will be important for us is that in this case the canonical map $\hocolim_{\cA}F\to\colim_{\cA}F$ is an equivalence, cf.\ \cite[Proposition~6.9]{BFSV}. 

Following \cite{BFV07} we define a $\scK_n^e(S)$-diagram 
\begin{equation}\label{eq:F-Kne-diagram}
F\colon \scK_n^e(S)\to \Top,\quad F(\nu)=\textstyle\bigcup_{\mu\to\nu}G(\mu),
\end{equation}
where the union is over all the morphism $\mu\to\nu$ in $\scK_n^e(S)$ and $F(\nu)$ is topologized as a subspace of $\scC_n(S)$. A morphism $\nu_1\to \nu_2$ gives rise to a subspace inclusion $F(\nu_1)\to F(\nu_2)$ which is a cofibration by Lemma~\ref{lem:G-union-cofibration}. The statement in the next proposition is contained as a special case of \cite[Lemma~4.5]{BFV07}. However, since the notation used in that paper is somewhat different from ours, we provide a direct proof of the result we need.

\begin{proposition}[\cite{BFV07}] \label{prop:FKe-Reedy-cofibrant}
The diagram $F\colon \scK_n^e(S)\to \Top$ is Reedy cofibrant and the canonical map $\colim_{\scK_n^e(S)}F\to \scC_n(S)$ is a homeomorphism.
\end{proposition}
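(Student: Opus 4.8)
The plan is to prove the two assertions separately, addressing the colimit identification first since it is essentially formal and then using it to set up the Reedy cofibrancy verification. For the homeomorphism $\colim_{\scK_n^e(S)}F\to\scC_n(S)$, I would first observe that each $F(\nu)$ is a subspace of $\scC_n(S)$ and that the structure maps $F(\nu_1)\to F(\nu_2)$ are the subspace inclusions, so there is a canonical continuous comparison map from the colimit. To see it is a bijection, I would use Lemma~\ref{lem:G(mu)-non-empty} together with the remark that $\scC_n(S)=\bigcup_\mu G(\mu)$ over all objects $\mu$ of $\scK_n^e(S)$: every point of $\scC_n(S)$ lies in some $G(\mu)\subseteq F(\mu)$, giving surjectivity, while injectivity amounts to showing that the colimit does not glue together points coming from genuinely different cubes, which follows because all the $F(\nu)$ sit inside the single ambient space $\scC_n(S)$ and the relations in the colimit are precisely the identifications forced by the subspace inclusions. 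Since the indexing poset $\scK_n^e(S)$ is finite and all maps are closed inclusions (Lemma~\ref{lem:G-union-cofibration}), the colimit is computed as a finite union of closed subspaces and the comparison map is then a homeomorphism onto its image $\scC_n(S)$.

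Next I would turn to Reedy cofibrancy, which requires showing for each object $\nu$ that the latching map
\[
\colim_{\partial(\scK_n^e(S)\sdarrow\nu)}F\to F(\nu)
\]
is a cofibration. The key is to identify the latching object concretely. The overcategory $(\scK_n^e(S)\sdarrow\nu)$ consists of the morphisms $\mu\to\nu$, and $F(\nu)=\bigcup_{\mu\to\nu}G(\mu)$ by definition; excluding the terminal object $\id_\nu$ should identify the latching object with $\bigcup_{\mu\speq\nu}G(\mu)$, the union over all $\mu$ admitting a \emph{non-identity} morphism to $\nu$. Thus the latching map is the inclusion
\[
\textstyle\bigcup_{\mu\to\nu,\ \mu\neq\nu}G(\mu)\ \subseteq\ \bigcup_{\mu\to\nu}G(\mu),
\]
which is exactly one of the inclusions in \eqref{eq:G(mu)-inclusions} with $V=\{\mu:\mu\to\nu\}$ and $U=V\setminus\{\nu\}$, hence a cofibration by Lemma~\ref{lem:G-union-cofibration}. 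The one point needing care is that the colimit over $\partial(\scK_n^e(S)\sdarrow\nu)$ really does compute this union rather than some more complicated quotient; this is where I would invoke Lemma~\ref{lem:Kne-infimum}, which guarantees that pairwise intersections $G(\mu_1)\cap G(\mu_2)$ are controlled by infima lying below $\nu$, so that the diagram over the overcategory has enough compatibility for its colimit to collapse to the union of the closed subspaces inside $\scC_n(S)$.

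The main obstacle I anticipate is precisely the colimit-versus-union identification for the latching object: one must verify that the finite colimit of the closed subspaces $G(\mu)$ indexed by $\partial(\scK_n^e(S)\sdarrow\nu)$ agrees with their set-theoretic union as a subspace of $\scC_n(S)$, with the correct topology. This is not automatic for colimits over a poset, and it is exactly where the infimum structure from Lemma~\ref{lem:Kne-infimum} must be used: because any non-empty intersection $G(\mu_1)\cap G(\mu_2)$ is contained in $G(\mu_0)$ for the infimum $\mu_0$, and $\mu_0$ again maps to $\nu$, the poset-indexed diagram satisfies the cofinality needed to ensure the canonical map from the colimit to the union is a homeomorphism. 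Once this identification is in place, combining it with Lemma~\ref{lem:G-union-cofibration} immediately yields both that the latching maps are cofibrations and that $\colim_{\scK_n^e(S)}F\cong\scC_n(S)$, completing the proof.
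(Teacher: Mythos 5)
Your proposal is correct and follows essentially the same route as the paper: both arguments rest on Lemma~\ref{lem:Kne-infimum} to identify colimits of $F$ over downward-closed subposets of $\scK_n^e(S)$ (the whole poset, and each latching category $\partial(\scK_n^e(S)\sdarrow\nu)$) with the corresponding unions $\bigcup G(\mu)$ inside $\scC_n(S)$, and then on Lemma~\ref{lem:G-union-cofibration} to conclude that the latching maps are cofibrations. The only difference is organizational: the paper proves a single general claim for an arbitrary downward-closed subcategory $\cA$ (constructing an explicit inverse to $\colim_{\cA}F\to\bigcup_{\mu\in\cA}G(\mu)$) and applies it to both statements at once, whereas you treat the full colimit and the latching objects separately --- and note that the hand-waved injectivity claim in your first paragraph is really the same point you correctly pin on Lemma~\ref{lem:Kne-infimum} later.
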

\begin{proof}
Consider more generally a subcategory $\cA$ of $\scK_n^e(S)$ that is downward closed in the sense that if $\nu$ is in $\cA$ and $\mu\to\nu$ is a morphism in $\scK_n^e(S)$, then $\mu$ is also in $\cA$. We claim that in this situation the canonical map $\colim_{\cA}F\to \cup_{\mu\in\cA}G(\mu)$ is a homeomorphism. Indeed, using Lemma~\ref{lem:Kne-infimum} we may define an inverse as indicated by the diagram
\[
\xymatrix@-6pt{
& G(\mu_1) \ar[r] & F(\mu_1) \ar[dr]& \\
G(\mu_1)\cap G(\mu_2) \ar[r] \ar[ur] \ar[dr]& G(\mu_0) \ar[r] & F(\mu_0) \ar[u]\ar[d]\ar[r] &\colim_{\cA}F.\\
& G(\mu_2) \ar[r] & F(\mu_2) \ar[ur]
}
\]
Combined with Lemma~\ref{lem:G-union-cofibration}, this gives the statement in the proposition.
\end{proof}

Given a subcategory $\cA$ of $\scK_n^e(S)$, one may generalize the above construction by defining an $\cA$-diagram
\begin{equation}\label{eq:F-A-diagram}
F_{\cA}\colon \cA\to \Top,\quad F_{\cA}(\nu)=\textstyle\bigcup_{\mu\to\nu}G(\mu),
\end{equation}
where now the union is over all morphisms $\mu\to \nu$ in $\cA$. The case $\cA=\scM_n(S)$ is analyzed in \cite[Corollary~6.8]{BFSV} where it is stated  that the analogue of Proposition~\ref{prop:FKe-Reedy-cofibrant} holds in this case. The next example shows that this is not true in general. 

\begin{example}\label{ex:FMn-diagram-not-Reedy-cof}
We consider the category $\scM_3(3)$ and write $F_{\!\scM}\colon \scM_3(3)\to \Top$ for the corresponding diagram defined as in \eqref{eq:F-A-diagram}. It turns out that this diagram is not Reedy cofibrant. Consider for instance the objects 
\[
\mu_1=1\Box_22\Box_23, \quad \mu_2=2\Box_3(1\Box_13),\quad \nu=2\Box_3(1\Box_23)
\]
and the diagram $\mu_1\to\nu\leftarrow\mu_2$ in $\scM_3(3)$. Let $(c_1,c_2,c_3)$ be the element in $\scC_3(3)$ defined by the subcubes
\[
\textstyle c_1=[0,\frac{1}{2}]\times[0,\frac{1}{3}]\times[\frac{1}{2},1],\quad 
c_2=[0,1]\times[\frac{1}{3},\frac{2}{3}]\times[0,\frac{1}{2}],\quad
c_3=[\frac{1}{2},1]\times[\frac{2}{3},1]\times[\frac{1}{2},1],
\]
and observe that $(c_1,c_2,c_3)$ is an element in $G(\mu_1)\cap G(\mu_2)$. If $\mu_2\to\mu\to\nu$ are morphisms in 
$\scM_3(3)$, then $\mu=\mu_2$ or $\mu=\nu$. Furthermore, if $\mu\to \mu_2$ is a morphism in $\scM_3(3)$ and $(c_1,c_2,c_3)$ is in $G(\mu)$, then $\mu=\mu_2$. This shows that the left hand part of the diagram 
\[
\xymatrix@-6pt{
& F_{\!\scM}(\mu_1) \ar[d] \ar[dr] & \\
F_{\!\scM}(\mu_1)\cap F_{\!\scM}(\mu_2) \ar[ur] \ar[dr] &  \colim_{\partial(\scM_3(3)\sdarrow\nu)}F_{\!\scM} \ar[r] & F_{\!\scM}(\nu)\\
& F_{\!\scM}(\mu_2) \ar[u] \ar[ur] & 
}
\]
is not commutative. Since the outer diagram is commutative by definition, we conclude that the horizontal map on the right is not injective. Hence this map is not a cofibration. One can show by a similar argument that the $\scM_3(3)$-diagram obtained by restricting the $\scK^e_3(3)$-diagram in \eqref{eq:F-Kne-diagram} is also not Reedy cofibrant.
\end{example}

\begin{remark}
It is also claimed in \cite{BFSV} that the analogue of Proposition~\ref{prop:FKe-Reedy-cofibrant} holds for the category $\scK_n(S)$, but again this is not true in general: Modifying the argument in the above example to the situation considered in \cite[Remark~6.6]{BFV07} reveals that the corresponding diagram is not Reedy cofibrant. The reason why the argument from Proposition~\ref{prop:FKe-Reedy-cofibrant} does not carry over to $\scM_n$ and $\scK_n$ is that the analogues of Lemma~\ref{lem:Kne-infimum} fail in these cases (contrary to what is claimed in \cite[Lemma~6.7]{BFSV}). It turns out that the situation improves if one restricts further to $\scM_n^{\sdarrow}(S)$, since in this case one can prove an analogue of Lemma~\ref{lem:Kne-infimum} so that the corresponding diagram is in fact Reedy cofibrant. We shall not make use of this fact.
\end{remark}

The following result was first proved by Berger~\cite[Theorem~1.16]{Berger97} with $\scK_n$ instead of $\scK^e_n$ and the $\scK^e_n$-analogue was later stated implicitly in \cite[Lemma~4.9]{BFV07}. For completeness, we write out the $\scK^e_n$-version of Berger's proof. 

\begin{proposition}\label{prop:F(nu)-contractible}
The space $F(\nu)$ is contractible for any object $\nu$ in $\scK_n^e(S)$.
\end{proposition}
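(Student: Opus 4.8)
The plan is to prove that $G(\nu)$ is a deformation retract of $F(\nu)$. Since $G(\nu)$ is a nonempty convex subspace of $(I^{2n})^{\times S}$ (it is nonempty by Lemma~\ref{lem:G(mu)-non-empty} because $\nu\in\scK_n^e(S)$), it is contractible, and the theorem follows. The first step is to record a description of $F(\nu)$ that decouples the defining condition edge by edge. If $\mathbf{c}=\langle c_z\rangle$ lies in $G(\mu)$ for some $\mu\to\nu$, then for each edge the separation prescribed by $\mu$ holds and the value $(\mu\{x,y\},\vec\mu\{x,y\})$ precedes $(\nu\{x,y\},\vec\nu\{x,y\})$; conversely, any edgewise choice of a separating value that precedes the value of $\nu$ assembles into an object $\mu$ with $G(\mu)\neq\emptyset$, hence $\mu\in\scK_n^e(S)$ by Lemma~\ref{lem:G(mu)-non-empty}, and automatically $\mu\to\nu$ since the morphism condition is imposed edge by edge. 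Writing the value of $\nu$ on an edge as $(i,(x,y))$, it follows that $\mathbf{c}\in F(\nu)$ is equivalent to requiring, for every edge, that $c_x<_dc_y$ for some $d\le i$ or $c_y<_dc_x$ for some $d<i$.

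The decisive feature of this reformulation is that a witnessing separation for the edge $\{x,y\}$ always occurs in a direction $d\le i$, where $i$ is the label of $\nu$ on that edge. I would exploit this by building the deformation retraction as a composite of $n$ stages indexed by the directions $e=n,n-1,\dots,1$. At stage $e$ I modify only the direction-$e$ coordinates of the cubes, moving them by a straight-line homotopy toward a configuration in which $c_x<_ec_y$ holds for every edge whose $\nu$-value is $(e,(x,y))$; concretely I would take the target to be a continuously chosen nondegenerate point of the convex set cut out in the direction-$e$ coordinates by these inequalities, obtained for instance by nearest-point projection onto that fixed convex set followed by a slight shrinking to keep all cubes nondegenerate. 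After all stages, every edge is separated in the direction equal to its $\nu$-label and with the prescribed orientation, so the endpoint lies in $G(\nu)$. Crucially, the target set at stage $e$ is nonempty precisely because the edges on which $\nu$ has label $e$ admit no oriented cycle of constant label $e$, which is exactly the defining condition of $\scK_n^e(S)$; this is the single place where the hypothesis $\nu\in\scK_n^e(S)$ enters.

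The main obstacle, and the heart of the argument, is to verify that each stage keeps the configuration inside $F(\nu)$, that is, that no edge ever loses all of its witnessing separations. Here the observation above does the work, provided the directions are processed from high to low. An edge with $\nu$-label $i>e$ has already reached its final position and is separated in direction $i$, which stage $e$ does not touch; an edge with label $i<e$ has a witness in a direction $\le i<e$, again untouched by stage $e$; and an edge with label $i=e$ retains a witness in some direction $\le e$ throughout stage $e$, since a witness in a direction $<e$ is held fixed, while a witness in direction $e$ is necessarily forward-oriented and survives the convex straight-line motion. Because a straight-line homotopy in a single coordinate direction between nondegenerate configurations stays nondegenerate and within $[0,1]$, the whole path remains in $\scC_n(S)$ and hence in $F(\nu)$. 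Composing the stages yields a deformation retraction onto the convex set $G(\nu)$, which proves contractibility. I note that the naive attempt to contract $F(\nu)$ linearly onto a single point of $G(\nu)$ fails, since an edge may momentarily lose its only separation; the staged, direction-by-direction motion is exactly what avoids this.
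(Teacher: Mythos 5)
Your construction is, at its core, the paper's own proof: the same edgewise reformulation of membership in $F(\nu)$, the same direction-by-direction straight-line homotopy processed from direction $n$ down to direction $1$, and the same case analysis showing that every edge retains a separating witness throughout each stage (you even correctly flag why the one-shot linear contraction fails, which is exactly why the paper stages the homotopy). Where you diverge is the choice of target: the paper fixes a single element $\ang{d_x}_{x\in S}$ of $G(\nu)$ once and for all (nonempty by Lemma~\ref{lem:G(mu)-non-empty}) and at stage $j$ interpolates the direction-$j$ coordinates toward those of $\ang{d_x}$, so the composite is a null-homotopy and no continuity or nondegeneracy questions arise. You instead insist on a deformation retraction onto $G(\nu)$, which forces the stage-$e$ target to depend on the input via nearest-point projection, and this is where your argument has a genuine, though repairable, gap.

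Concretely, the nearest-point projection onto the closed convex set cut out by the non-strict separation inequalities can produce degenerate intervals: for two label-$e$ edges forming a chain $x\to y\to z$ with all three direction-$e$ intervals equal to $[0,1]$, the projection is $[0,\tfrac12]$, $[\tfrac12,\tfrac12]$, $[\tfrac12,1]$, collapsing the middle cube. So your ``slight shrinking'' is genuinely needed; but any continuous shrinking that repairs degeneracy (say, a convex combination with a fixed nondegenerate point of the set) necessarily moves configurations that already lie in $G(\nu)$, so the composite of your stages does not fix $G(\nu)$ pointwise and is \emph{not} a deformation retraction, contrary to your concluding claim; while shrinking only the degenerate outputs is not continuous. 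The flaw is localized and fixable in two ways: either note that what your stages actually produce --- a homotopy from $\id_{F(\nu)}$ to a map with image in $G(\nu)$ --- already suffices, since $G(\nu)$ is convex and contained in $F(\nu)$, so you may follow your homotopy by the straight-line contraction of $G(\nu)$ to one of its points; or simply abandon the retraction requirement and interpolate toward a fixed element of $G(\nu)$, which is precisely the paper's proof.
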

\begin{proof}
In the following we shall view $F(\nu)$ as a subspace of $\scC_n(1)^{\times S}$. For an element 
$\ang{c_x}_{x\in S}$ of the latter, given by little $n$-cubes $c_x=\ang{f^x_i}_{i=1}^n$, the condition for this to be in $F(\nu)$ is that for each pair $\{x,y\}$ such that $\vec\nu\{x,y\}=(x,y)$, we have
\[
f^x_i(1)\leq f^y_i(0) \text{ for some } i \leq\nu\{x,y\}, \quad\text{or}\quad f^y_i(1)\leq f^x_i(0) \text{ for some } i<\nu\{x,y\}.
\]
Let us fix an element $\ang{d_x}_{x\in S}$ in $G(\nu)$, given by little $n$-cubes $d_x=\ang{g^x_i}_{i=1}^n$. We define a sequence of homotopies 
$H_j\colon \scC_n(1)^{\times S}\times I\to \scC_n(1)^{\times S}$ for $1\leq j\leq n$, by
\[
H_j(\ang{f^x_1,\dots,f^x_n}_{x\in S},t)=\ang{f^x_1,\dots,f^x_{j-1},(1-t)f^x_j+tg^x_j,g^x_{j+1},\dots,g^x_n}_{x\in S}.
\]
It follows from the above description of $F(\nu)$ that these homotopies restrict to homotopies $H_j\colon F(\nu)\times I\to F(\nu)$. Furthermore, we observe that $H_n(-,0)$ is the identity function on $F(\nu)$, that $H_j(-,1)=H_{j-1}(-,0)$ for $2\leq j\leq n$, and that $H_1(-,1)$ is constant, mapping every element in $F(\nu)$ to $\ang{d_x}_{x\in S}$. Hence these homotopies combine to define a null-homotopy of $F(\nu)$.
\end{proof}

Finally, we review the argument from the proof of \cite[Lemma~4.13]{BFV07} stating that $\num{\scK_n^e}$ is an $E_n$-operad. Given a finite set $S$, consider the $\scK_n^e(S)$-diagram $F$ in \eqref{eq:F-Kne-diagram} and the equivalences
\begin{equation}\label{eq:Kne(S)-Cn(S)-equivalence}
\textstyle\num{\scK_n^e(S)}\cong\hocolim_{\scK_n^e(S)}* \xl{\simeq}\hocolim_{\scK_n^e(S)}F\xr{\simeq}\colim_{\scK_n^e(S)}F\cong \scC_n(S).
\end{equation}
Here the left hand map is induced by the projection $F\to *$ onto the terminal diagram. This is an equivalence by Proposition~\ref{prop:F(nu)-contractible} and the fact that homotopy colimits preserve object-wise equivalences. The right hand map is an equivalence since $F$ is Reedy cofibrant by Proposition~\ref{prop:FKe-Reedy-cofibrant}. In the following we shall write 
\[
\textstyle F_{h\scK_n^e(k)}=\hocolim_{\scK_n^e(k)}F, \quad k\geq 0.
\]
Using the description of the homotopy colimit from \cite{Hirschhorn}, it is not difficult to see that these spaces define an operad $F_{h\scK_n^e}$ with structure maps inherited from $\scK_n^e$ and $\scC_n$. For the $\Sigma_k$-action on $F_{h\scK_n^e(k)}$, this uses that for each object $\mu$ in $\scK_n^e(k)$ and for each element $\sigma$ in $\Sigma_k$, there is a factorization
\[
\xymatrix@-5pt{
G(\mu) \ar@{.>}[r] \ar[d]& G(\mu\sigma)\ar[d]\\
\scC_n(k) \ar[r]^-{\sigma^*} & \scC_n(k).
}
\]
The definition of the operad product uses that for each tuple of objects $\mu$ in $\scK^e_n(k)$ and $\nu_i$ in $\scK^e_n(j_i)$ for $1\leq i\leq k$, there is a factorization 
\[
\xymatrix@-5pt{
G(\mu)\times G(\nu_1)\times \dots\times G(\nu_k) \ar@{.>}[r] \ar[d]& G(\gamma(\mu,\nu_1,\dots,\nu_k))\ar[d]\\
\scC_n(k)\times \scC_n(j_1)\times \dots\times \scC_n(j_k) \ar[r] & \scC_n(j_1+\dots+j_k).
}
\]
Since the operad structure of $F_{h\scK_n^e}$ is inherited from those of $\scK_n^e$ and $\scC_n$, it follows that the weak equivalences in \eqref{eq:Kne(S)-Cn(S)-equivalence} give rise to equivalences of operads
\[
\num{\scK_n^e}\xl{\simeq} F_{h\scK_n^e} \xr{\simeq} \scC_n,
\]
showing that $\num{\scK_n^e}$ is indeed an $E_n$-operad. Using this, we can complete the proof of Theorem~\ref{thm:intro-categorical-sub(pre)operad-En} from the introduction.

\begin{proof}[Proof of Theorem~\ref{thm:intro-categorical-sub(pre)operad-En}]
Let $\scP$ be a categorical sub(pre)operad of $\scK_n^e$ that contains one of the preoperads $\scM_n^{\suarrow}$ or 
$\scM_n^{\sdarrow}$. Then Theorem~\ref{thm:suboperad-Kne-equivalence} implies that the inclusion in $\scK_n^e$ gives an equivalence $\num{\scP}\to\num{\scK_n^e}$. Combining this with the equivalences considered above, we get a chain of equivalences relating $\num{\scP}$ and $\scC_n$.
\end{proof}

The properties of the functor $F$ needed for the definition of the operad $F_{h\scK_n^e}$ have been formalized in a more general setting in \cite[Lemma~6.11]{BFSV}. Applied to the operads $\scM_n$ and $\scK_n$ and the corresponding functors \eqref{eq:F-A-diagram}, it is claimed in the proof of \cite[Theorem~3.14]{BFSV} that the arguments leading to the equivalences in \eqref{eq:Kne(S)-Cn(S)-equivalence} also work in these cases. However, as we have seen that the functor in \eqref{eq:F-A-diagram} is not in general Reedy cofibrant when $\scK^e_n$ is replaced by $\scM_n$ or  $\scK_n$, these arguments are not immediately valid. In the present paper we have avoided this problem by passing through $\scK^e_n$ and relying on Theorem~\ref{thm:suboperad-Kne-equivalence}. We end by showing that our main result for a sub(pre)operad $\scP$ of $\scK^e_n$ can be formulated without passing through $\scK^e_n$. For this it seems easiest not to work with the corresponding functor \eqref{eq:F-A-diagram}, but to instead restrict the diagram $F$ in \eqref{eq:F-Kne-diagram} to the category $\scP(k)$ via the inclusion of the latter in $\scK^e_n(k)$. Using the formalism of \cite[Lemma~6.11]{BFSV}, we then get a (pre)operad $F_{h\scP}$ defined in analogy with $F_{h\scK_n^e}$, but with $\scK^e_n$ replaced by $\scP$.
\begin{corollary}
Let $\scP$ be a categorical sub(pre)operad of $\scK^e_n$ that contains one of the preoperads $\scM_n^{\suarrow}$ or $\scM_n^{\sdarrow}$. Then there are equivalences of $E_n$-(pre)operads
\[
\num{\scP} \xl{\simeq} F_{h\scP}\xr{\simeq} \scC_n.
\]
\end{corollary}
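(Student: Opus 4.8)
The plan is to treat the two equivalences separately, reducing each to facts already established, and then to record that every map in sight is a map of (pre)operads. The key observation throughout is that by Proposition~\ref{prop:F(nu)-contractible} each space $F(\nu)$ is contractible, so the restriction of the diagram \eqref{eq:F-Kne-diagram} to $\scP(k)$ is objectwise contractible for every $k\geq 0$.

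For the left-hand equivalence $F_{h\scP}\xra{\simeq}\num{\scP}$ I would argue exactly as in \eqref{eq:Kne(S)-Cn(S)-equivalence}: the projection $F\to *$ onto the terminal diagram restricts to an objectwise equivalence on $\scP(k)$, and since homotopy colimits preserve objectwise equivalences, the induced map $\hocolim_{\scP(k)}F\xra{\simeq}\hocolim_{\scP(k)}*\cong\num{\scP(k)}$ is an equivalence for each $k$. For the right-hand equivalence I would pass through $\scK_n^e$ rather than compare $F_{h\scP}$ with a colimit directly. The inclusion of (pre)operads $\scP\hookrightarrow\scK_n^e$ together with the naturality of the construction of \cite[Lemma~6.11]{BFSV} induces a map of (pre)operads $F_{h\scP}\to F_{h\scK_n^e}$ fitting into the commutative square
\[
\xymatrix@-5pt{
F_{h\scP(k)} \ar[r] \ar[d]_{\simeq} & F_{h\scK_n^e(k)} \ar[d]^{\simeq}\\
\num{\scP(k)} \ar[r] & \num{\scK_n^e(k)}
}
\]
whose vertical maps are the projections onto the nerves. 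The vertical maps are the equivalences just discussed, and the bottom horizontal map is an equivalence by Theorem~\ref{thm:suboperad-Kne-equivalence} (applied with $\scQ=\scK_n^e$); this is the only point where the hypothesis that $\scP$ contains $\scM_n^{\suarrow}$ or $\scM_n^{\sdarrow}$ enters. Two-out-of-three then shows $F_{h\scP}\to F_{h\scK_n^e}$ is a levelwise equivalence, and composing with the equivalence $F_{h\scK_n^e}\xra{\simeq}\scC_n$ already produced in \eqref{eq:Kne(S)-Cn(S)-equivalence} gives the desired map $F_{h\scP}\xra{\simeq}\scC_n$.

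Finally I would check compatibility with the operad structures. By construction the (pre)operad structure on $F_{h\scP}$ is inherited from $\scP$ and $\scC_n$ in the same way as for $F_{h\scK_n^e}$, so both the projection $F_{h\scP}\to\num{\scP}$ and the comparison $F_{h\scP}\to F_{h\scK_n^e}$ are maps of (pre)operads; being equivalences in each operad degree, they are equivalences of (pre)operads, and hence so is $F_{h\scP}\xra{\simeq}\scC_n$. The resulting chain identifies $\num{\scP}$ with $\scC_n$ up to equivalence, so all three objects are $E_n$-(pre)operads, as in Theorem~\ref{thm:intro-categorical-sub(pre)operad-En}.

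The one point requiring care is that the hypothesis on $\scP$ only guarantees that the inclusion $\scP(S)\to\scK_n^e(S)$ is homotopy final \emph{or} homotopy initial, not both; in particular, when $\scP$ contains only $\scM_n^{\sdarrow}$ one cannot invoke homotopy finality to compare the two homotopy colimits of $F$ directly. The whole point of routing through the nerves is to sidestep this: objectwise contractibility reduces the comparison to $\num{\scP}\to\num{\scK_n^e}$, which is an equivalence in both cases by Theorem~\ref{thm:suboperad-Kne-equivalence}, and the two-out-of-three step transports this back to the homotopy colimits of $F$. I expect this decoupling to be the main conceptual step, with the remaining verifications being formal.
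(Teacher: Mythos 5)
Your proof is correct and follows essentially the same route as the paper: both obtain the left equivalence from the objectwise contractibility of $F(\nu)$ (Proposition~\ref{prop:F(nu)-contractible}), and both deduce the right equivalence by two-out-of-three in the commutative diagram of (pre)operads comparing the $\scP$-level and $\scK_n^e$-level constructions, with Theorem~\ref{thm:suboperad-Kne-equivalence} supplying the equivalence $\num{\scP}\to\num{\scK_n^e}$. The only cosmetic difference is that the paper defines $F_{h\scP}\to\scC_n$ directly via $\colim_{\scP(k)}F\to\scC_n(k)$ (noting this need not be a homeomorphism), whereas you define it as the composite through $F_{h\scK_n^e}$; by the commutativity of the diagram these agree.
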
 
\begin{proof}
The maps in the diagram are defined as in \eqref{eq:Kne(S)-Cn(S)-equivalence} with $\scK^e_n$ replaced by $\scP$. Here the last map $\colim_{\scP(k)}F\to \scC_n(k)$ need not be a homeomorphism if the inclusion of $\scP(k)$ in $\scK_n^e(k)$ is not final, but that does not affect the argument. (It is a homeomorphism if $\scP(k)$ contains $\scM_n^{\suarrow}(k)$). It follows from the definitions that there is a commutative diagram of (pre)operads
\[
\xymatrix@-5pt{
\num{\scP} \ar[d]& F_{h\scP} \ar[l] \ar[r] \ar[d]& \scC_n\ar@{=}[d]\\
\num{\scK_n^e} & F_{h\scK_n^e} \ar[l] \ar[r] & \scC_n.
}
\]
Here we have already observed that the horizontal maps at the bottom of the diagram are equivalences and the left hand horisontal map at the top is again an equivalence by Proposition~\ref{prop:F(nu)-contractible}. Hence the result follows from Theorem~\ref{thm:suboperad-Kne-equivalence} which shows that $\num{\scP}\to\num{\scK_n^e}$ is an equivalence.
\end{proof}

\bibliographystyle{plain}
%\bibliography{En-aspects.bib}

\end{document}